
\documentclass{article}
\usepackage{t1enc,amsfonts,latexsym,amsmath,amsthm,verbatim,amssymb,graphics,mathdots,pstool,color}
\usepackage[utf8]{inputenc}
\usepackage{pgfplots}
\usepackage[showonlyrefs]{mathtools}
\usepackage{courier}
\usepackage{listings}
\usepackage[framed,numbered,autolinebreaks,useliterate]{mcode}
\usepackage{ amsmath,amsfonts}
\usepackage{graphicx}
\usepackage{mathrsfs}
\usepackage[export]{adjustbox}
\newcommand{\rank}{\mathsf{rank}}

\DeclareMathOperator*{\argmin}{arg\,min }
\DeclareMathOperator*{\argmax}{arg\,max }
\newcommand{\fro}[1]{\left\| #1\right\|_2^2}
\newcommand{\scal}[1]{\left \langle #1 \right \rangle}
\newcommand{\para}[1]{\left( #1\right)}

\newtheorem{theorem}{Theorem}[section]
\newtheorem{proposition}[theorem]{Proposition}

\newtheorem{corollary}[theorem]{Corollary}
\newtheorem{definition}  [theorem] {Definition}

\def\L{{\mathcal L}}
\def\T{{\mathcal T}}
\def\P{{\mathcal P}}

\def\M{{\mathcal{M}}}
\newcommand{\I}{\mathscr{N}}
\newcommand{\J}{\mathscr{J}}
\newcommand{\K}{\mathscr{K}}

\renewcommand{\H}{\mathcal{H}}
\newcommand{\N}{\mathscr{N}}
\title{Convergence of dual ascent in non-convex/non-differentiable optimization}
\author{Fredrik Andersson, Marcus Carlsson and Carl Olsson \thanks{Centre for Mathematical Sciences , Lund University, Box 118, SE-22100, Lund,  Sweden,  {fa,mc,calle@maths.lth.se}}}
\date{}

\begin{document}
\maketitle
\begin{abstract}
We revisit the classical dual ascent algorithm for minimization of convex functionals in the presence of linear constraints, and give convergence results which apply even for non-convex functionals. We describe limit points in terms of the convex envelope. We also introduce a new augmented version, which is shown to have superior convergence properties, and provide new results even for convex but non-differentiable objective functionals (as well as non-convex).

The results are applied to low rank approximation of a given matrix, subject to linear constraints. In particular, letting the linear constraints enforce Hankel structure of the respective matrices, the algorithms can be applied to complex frequency estimation. We provide numerical tests in this setting.

\end{abstract}

\section{Introduction and Motivation}\label{sec:intro}
A classical algorithm for solving linearily constrained convex optimization problems is the dual ascent scheme. Given a functional $\N(x)$ on some Hilbert space $\H$ and a set of linear constraints $\T(x)=b$, where $\T$ is a linear operator, the objective is to solve
\begin{equation}
\min_{\T(x)=b} \N(x).
\end{equation}
For simplicity we assume that $b=0$ since this can be achieved by translating the origin.
The condition $\T(x)=0$ then becomes equivalent to $x\in\M$ where $\M$ is the kernel of $\T$.

The dual ascent method considers the dual problem
$\max_\lambda g(\lambda),$
where the dual function is $g(\lambda) = \min_x \L_0(x,\lambda)$ and the Lagrangian is
\begin{equation}\label{lagr1}
\L_0(x,\lambda)=\N(x)+\langle \T(x),\lambda\rangle.
\end{equation}
The parameter $\lambda$ is the so called Lagrange multiplier and is an element in the codomain of $\T$. By introducing $\Lambda \in \M^\perp$ we may equivalently consider the restricted Lagrangian
\begin{equation}\label{lagr1}
\L(x,\Lambda)=\N(x)+\langle x ,\Lambda\rangle, \quad \Lambda \in \M^\perp.
\end{equation}
The dual ascent method tries to maximize the dual function by alternatively updating $x$ and $\Lambda$ according to
\begin{equation}\label{eq:dual_ascent_iter}
\begin{dcases}
x^{n+1} &= \argmin_{x}\N(x) + \langle \Lambda^n, x \rangle \\
\Lambda^{n+1} &= \Lambda^{n} + \alpha_k \P_{\M^\perp}(x^{n+1}),
\end{dcases}
\end{equation}
where $\P_{\M^\perp}$ denotes projection onto $\M^\perp$.
It can be seen that $x^{n+1}$ is a subgradient of $g$ at $\Lambda^n$ (see Section~\ref{sec2} for details), and the $\Lambda$-update can therefore be thought of as a projected gradient ascent step.

The main objective of this paper is to derive convergence results for cases where the objective function is non-differentiable and non-convex. While there exist general convergence results for the dual ascent approach these typically make additional smoothness assumptions (see Section~\ref{sec:related_work}). In contrast, the non-differentiable case remains relatively unexplored.
Our work is motivated by the problem of least squares low rank approximation with constraints.
Here
\begin{equation}\label{jf1}
\N(X) = \sigma_0^2\rank(X) +\|X-F\|_2^2,
\end{equation}
where $\sigma_0$ is a rank penalizing parameter, $F$ is a fixed matrix, typically related to measured data, and $\|\cdot\|_2$ denotes the Frobenius norm.
The constraint set $\M$ corresponds to some specific matrix structure, such as Hankel-form. Note that the $\rank$-function is constant everywhere except on a set of measure zero and non-convex. Furthermore, the sought optimum is typically of low rank and therefore located in the vicinity of discontinuities. This article contains convergence results that applies to the dual ascent scheme \eqref{eq:dual_ascent_iter} in this situation, distinguishing it from previous contributions listed in Section \ref{sec:related_work}.

\subsection{Contributions and Results}



The main contributions of our paper are twofold:
\begin{itemize}
\item We show convergence of the dual ascent updates \eqref{eq:dual_ascent_iter} for a general class non-convex (possibly discontinuous) objective functions $\N$ and describe the limit point in terms of the lower semi-continuous (l.s.c) convex envelope of $\N,$ denoted $\N^{**}$ .
\item  We propose a family of augmented formulations that can approximate the original problem arbitrarily well while exhibiting improved convergence properties. This also gives new results about convergence for non-differentiable convex functionals.
\end{itemize}
Our results are valid under the assumption of a so called \emph{feasible} objective
(see Definition~\ref{deffeas} for details). The functions $\N$ is feasible if it is
l.s.c, proper and $\N(x)$ grows sufficiently fast as $\|x\| \rightarrow \infty$.

Below we give simplified versions of our main results developed in Sections~\ref{secconv}~and~\ref{secconvII} respectively.

\begin{theorem}\label{t1intro}
Let $\N$ be feasible and consider the dual ascent scheme
\begin{align}
\label{dancintro1}x^{n+1} &= \argmin_{x}\N(x) + \langle \Lambda^n, x \rangle \\
\label{dancintro2}\Lambda^{n+1} &= \Lambda^{n} + \frac{1}{n+1} \P_{\M^\perp}(x^{n+1}),
\end{align}
with $\Lambda^0=0$. Suppose that the sequences $(x^{n})_{n=1}^\infty$ and $(\Lambda^{n})_{n=1}^\infty$ are bounded. Then there are convergent subsequences of  $(x^{n})_{n=1}^\infty$ and $(\Lambda^{n})_{n=1}^\infty$ with corresponding limits $x^\star$ and $\Lambda^\star$ satisfying; If $x^\star\in\M$, then it is a solution to \begin{equation}\label{convex_problemintro}\argmin_{x\in\M}\N^{**}(x).\end{equation} Moreover this happens whenever $$\argmin_{x}\N^{**}(x)+\scal{x,\Lambda^\star}$$ has a unique solution.
\end{theorem}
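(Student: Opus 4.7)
The plan is to read the updates as a projected supergradient ascent on the concave dual function $g(\Lambda)=\inf_x \N(x)+\langle x,\Lambda\rangle=-\N^*(-\Lambda)$ restricted to $\M^\perp$, and then combine a telescoping energy estimate with the biconjugate identities $\min_x \N(x)+\langle x,\Lambda\rangle=\min_x \N^{**}(x)+\langle x,\Lambda\rangle$ and $\argmin_x \N(x)+\langle x,\Lambda\rangle\subseteq\argmin_x \N^{**}(x)+\langle x,\Lambda\rangle$ (both immediate from $\N$ and $\N^{**}$ sharing a Legendre conjugate).

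First I would handle the routine identifications. By boundedness, extract convergent subsequences $x^{n_k}\to x^\star$ and $\Lambda^{n_k}\to\Lambda^\star$; since $\Lambda^0=0$ and every increment $\Lambda^{n+1}-\Lambda^n$ lies in $\M^\perp$, induction places each $\Lambda^n$, and hence $\Lambda^\star$, in $\M^\perp$. Passing to the limit in the optimality inequality for $x^{n_k+1}$ with lower semi-continuity of $\N$ on the left-hand side yields $x^\star\in\argmin_x \N(x)+\langle x,\Lambda^\star\rangle$; the biconjugate identities then place $x^\star$ in $\argmin_x \N^{**}(x)+\langle x,\Lambda^\star\rangle$ as well and give $\N(x^\star)=\N^{**}(x^\star)$. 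The ``if $x^\star\in\M$'' implication is then immediate: $\Lambda^\star\in\M^\perp$ gives $\langle x^\star,\Lambda^\star\rangle=0$, so $\N^{**}(x^\star)=\min_x\N^{**}(x)+\langle x,\Lambda^\star\rangle$, and for any $y\in\M$, $\N^{**}(y)=\N^{**}(y)+\langle y,\Lambda^\star\rangle\ge\N^{**}(x^\star)$.

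For the ``moreover'' clause, let $\tilde x$ be the assumed unique element of $\argmin_x \N^{**}(x)+\langle x,\Lambda^\star\rangle$. Uniqueness combined with the argmin inclusion forces $x^\star=\tilde x$, and the remaining task is to show $\tilde x\in\M$. Here I would use that $x^{n+1}$ is a supergradient of the concave function $g$ at $\Lambda^n$ (a direct consequence of being an inner minimizer), so expanding $\|\Lambda^{n+1}-\tilde\Lambda\|^2$ for arbitrary $\tilde\Lambda\in\M^\perp$, together with the identity $\langle \P_{\M^\perp}(x^{n+1}),\Lambda^n-\tilde\Lambda\rangle=\langle x^{n+1},\Lambda^n-\tilde\Lambda\rangle$ and the supergradient inequality, yields
\begin{equation}
\|\Lambda^{n+1}-\tilde\Lambda\|^2\le\|\Lambda^n-\tilde\Lambda\|^2+2\alpha_n\bigl(g(\Lambda^n)-g(\tilde\Lambda)\bigr)+\alpha_n^2\|x^{n+1}\|^2,
\end{equation}
with $\alpha_n=1/(n+1)$. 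Telescoping, using $\sum\alpha_n^2<\infty$, $\sum\alpha_n=\infty$, and boundedness of $(x^n)$, forces $\limsup g(\Lambda^n)\ge g(\tilde\Lambda)$ for every $\tilde\Lambda\in\M^\perp$, hence $\limsup g(\Lambda^n)=\sup_{\M^\perp} g$. Extracting a further subsequence along which $g(\Lambda^{n_k})\to\sup g$ (possible by boundedness) and using upper semi-continuity of $g$ (it is the concave conjugate of a convex function) makes $\Lambda^\star$ a maximizer of $g$ on $\M^\perp$. The first-order condition then reads $0\in\P_{\M^\perp}(\partial g(\Lambda^\star))$, and the standard identification $\partial g(\Lambda^\star)=\argmin_x\N^{**}(x)+\langle x,\Lambda^\star\rangle=\{\tilde x\}$ gives $\P_{\M^\perp}(\tilde x)=0$, so $\tilde x\in\M$.

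The main obstacle is the energy/telescoping step: feasibility of $\N$ must be invoked to guarantee coercive and well-posed inner subproblems, a uniform bound on the selected supergradients, and upper semi-continuity of $g$, all of which the classical subgradient-ascent proof quietly needs. A secondary delicate point is the distinction between $\argmin_x \N(x)+\langle x,\Lambda\rangle$ and its convex-envelope counterpart; this distinction is exactly what the unique-solution hypothesis erases in the ``moreover'' clause, and is what lets us upgrade ``$x^\star\in\M$'' from a hypothesis to a conclusion.
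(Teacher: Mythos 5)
Your overall route is the same as the paper's: view \eqref{dancintro2} as projected supergradient ascent on the dual $g(\Lambda)=-\N^*(-\Lambda)$, use the telescoping estimate (which is exactly the subgradient-method bound the paper imports from the literature) to show the dual values approach $\sup_{\M^\perp}g$, identify $\partial g(\Lambda^\star)$ with the Lagrangian argmin, and use a stationarity/duality argument at the constrained dual maximizer to force $\P_{\M^\perp}\tilde x=0$ under the uniqueness hypothesis. The first paragraph's Lagrangian-sufficiency argument for the ``if $x^\star\in\M$'' clause is fine, and your telescoping inequality and the normal-cone argument $0\in\P_{\M^\perp}\partial g(\Lambda^\star)$ are both correct (continuity of $g$, needed for the sum rule and for u.s.c., is supplied by Proposition \ref{p2}).

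The genuine problem is the order in which you extract subsequences. You fix arbitrary convergent subsequences $x^{n_k}\to x^\star$, $\Lambda^{n_k}\to\Lambda^\star$ at the outset, and only in the ``moreover'' step propose to pass to ``a further subsequence along which $g(\Lambda^{n_k})\to\sup g$.'' That step fails as stated: since $g$ is continuous (Proposition \ref{p2}), every further subsequence of $\Lambda^{n_k}$ has $g(\Lambda^{n_k})\to g(\Lambda^\star)$, and nothing in your first paragraph guarantees $g(\Lambda^\star)=\sup_{\M^\perp}g$ --- and without that, $\Lambda^\star$ need not be a dual maximizer and the conclusion $\tilde x\in\M$ genuinely fails (if $\Lambda^\star$ is not a maximizer and $\partial g(\Lambda^\star)=\{\tilde x\}$, then $\P_{\M^\perp}\tilde x=\P_{\M^\perp}\nabla g(\Lambda^\star)\neq 0$). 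The fix is to reverse the extraction, which is what the paper does by defining the best-iterate indices $n_k=\argmax_{0\le n\le k}g(\Lambda^n)$ \emph{before} invoking compactness: your telescoping bound gives $\limsup_n g(\Lambda^n)=\sup_{\M^\perp}g$, so first select indices along which $g(\Lambda^n)\to\sup_{\M^\perp}g$, then extract a sub-subsequence along which $\Lambda^{n}$ and $x^{n+1}$ both converge (this also repairs the minor index mismatch: you state the optimality inequality for $x^{n_k+1}$ but extracted convergence of $x^{n_k}$). Since the theorem only asserts the existence of suitable subsequences, and your ``if $x^\star\in\M$'' argument works for any matching limits, nothing else in your proof needs to change.
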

We remark that $1/(n+1)$ may be replaced by more general sequences, see Theorem \ref{t1} for more details.

The condition on boundedness clearly limits the applicability of the above result, but we will show that these are satisfied in the situation of low-rank approximation (Section \ref{sec:newsec3}). Moreover it would be desirable for the entire sequence to converge. The next theorem show that these issues disappear upon adding a small quadratic term to the objective functional. 

\begin{theorem}\label{t3intro}
Let $\alpha>0$ be fixed, let $\N$ be a finite-valued feasible functional, and consider the augmented dual ascent scheme
\begin{align}
\label{convex_augmented_da_intro1}x^{n+1} &= \argmin_{x} \N^{**}(x)+\scal{x,\Lambda^n} +\frac{\alpha}{2}\|x\|^2 \\
\label{convex_augmented_da_intro2}\Lambda^{n+1} &= \Lambda^{n} + \alpha\P_{\M^\perp}(x^{n+1}),
\end{align}
with $\Lambda^0=0$. It then holds that $(x^{n})_{n=1}^\infty$ and $(\Lambda^{n})_{n=1}^\infty$ converges to some limits $x^\star$ and $\Lambda^\star$, where $x^\star$ is the solution to \begin{equation}\label{convex_problem_augintro}\argmin_{x\in\M}\N^{**}(x)+\frac{\alpha}{2}\|x\|^2.\end{equation}
\end{theorem}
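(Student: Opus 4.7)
The plan is to recognize the augmented dual ascent as plain gradient ascent on a smooth concave dual, run at exactly the reciprocal of the gradient's Lipschitz constant, and then invoke a standard Fejér/Baillon--Haddad argument.

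First I would exploit the $\alpha$-strong convexity of $x\mapsto \N^{**}(x)+\scal{x,\Lambda}+\tfrac{\alpha}{2}\|x\|^2$ to conclude that \eqref{convex_augmented_da_intro1} has a unique minimizer $x(\Lambda)$ for every $\Lambda\in\M^\perp$, and that $\Lambda\mapsto x(\Lambda)$ is $(1/\alpha)$-Lipschitz: this follows from monotonicity of $\partial\N^{**}$ applied to the optimality conditions for $x(\Lambda_1), x(\Lambda_2)$. Define the dual
\[
g(\Lambda)=\min_{x}\N^{**}(x)+\scal{x,\Lambda}+\tfrac{\alpha}{2}\|x\|^2,\qquad \Lambda\in\M^\perp.
\]
A standard Danskin/envelope argument then gives that $g$ is concave, finite-valued, and everywhere differentiable with $\nabla g(\Lambda)=\P_{\M^\perp}(x(\Lambda))$, which is $(1/\alpha)$-Lipschitz. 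In particular \eqref{convex_augmented_da_intro2} becomes
\[
\Lambda^{n+1}=\Lambda^n+\alpha\,\nabla g(\Lambda^n),
\]
i.e.\ gradient ascent on $g$ with step size equal to $1/L$ where $L=1/\alpha$.

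Next I would establish that a maximizer $\Lambda^\star\in\M^\perp$ exists. Since $\N$ is finite-valued and feasible, $\N^{**}$ is finite-valued, convex, and l.s.c., and $\M$ is a closed subspace containing $0$, so strong duality holds for the augmented primal \eqref{convex_problem_augintro} (the relative interior constraint qualification is trivially met). The unique primal optimum $x^\star$ satisfies $x^\star=x(\Lambda^\star)$ with $\nabla g(\Lambda^\star)=\P_{\M^\perp}(x^\star)=0$.

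Now I would invoke Baillon--Haddad for the convex function $-g$ with $(1/\alpha)$-Lipschitz gradient to obtain $\alpha$-cocoercivity of $\nabla g$. Expanding $\|\Lambda^{n+1}-\Lambda^\star\|^2$, using $\nabla g(\Lambda^\star)=0$ and the cocoercivity inequality, one gets the Fejér-type estimate
\[
\|\Lambda^{n+1}-\Lambda^\star\|^2 \le \|\Lambda^n-\Lambda^\star\|^2-\alpha^2\|\nabla g(\Lambda^n)\|^2.
\]
This immediately yields that $(\Lambda^n)$ is bounded, $\|\Lambda^n-\Lambda^\star\|$ is nonincreasing, and $\nabla g(\Lambda^n)=\P_{\M^\perp}(x^{n+1})\to 0$.

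Finally I would promote this to full convergence. Any cluster point $\bar\Lambda$ of $(\Lambda^n)$ satisfies $\nabla g(\bar\Lambda)=0$ by continuity of $\nabla g$, hence is itself a dual maximizer, so the Fejér inequality may be rerun with $\Lambda^\star$ replaced by $\bar\Lambda$; a standard Opial-type argument rules out the existence of two distinct cluster points, yielding $\Lambda^n\to\Lambda^\star$. The $(1/\alpha)$-Lipschitz dependence of $x(\cdot)$ on $\Lambda$ then gives $x^{n+1}=x(\Lambda^n)\to x(\Lambda^\star)=x^\star$, which is the unique minimizer of \eqref{convex_problem_augintro}. The step I expect to be most delicate is the attainment of the dual optimum $\Lambda^\star$: in finite dimensions it is routine, but in a general Hilbert space one must carefully invoke the subspace structure of $\M$ and the continuity of the finite-valued convex l.s.c.\ function $\N^{**}$ to guarantee both strong duality and a bounded dual optimal set.
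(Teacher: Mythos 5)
Your proposal is correct, but it follows a genuinely different route from the paper. You keep the full quadratic $\tfrac{\alpha}{2}\|x\|^2$ in the dual function, so that $g$ is the conjugate of an $\alpha$-strongly convex function, hence differentiable with $(1/\alpha)$-Lipschitz gradient; the scheme is then explicit gradient ascent at step exactly $1/L$, and you conclude via Baillon--Haddad cocoercivity, the Fej\'er inequality $\|\Lambda^{n+1}-\Lambda^\star\|^2\le\|\Lambda^n-\Lambda^\star\|^2-\alpha^2\|\nabla g(\Lambda^n)\|^2$, and an Opial-type argument, having secured existence of a dual maximizer \emph{a priori} through Fenchel duality attainment (the constraint qualification being trivial because $\N^{**}$ is finite-valued, hence continuous, in finite dimensions). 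The paper instead splits the quadratic: it sets $\J=\N^{**}+\tfrac{\alpha}{2}\|\P_\M\cdot\|^2$ and observes that the remaining $\tfrac{\alpha}{2}\|\P_{\M^\perp}x\|^2$ makes the multiplier update \emph{implicit}, i.e.\ $x^{n+1}=\nabla g(\Lambda^{n+1})$ with the (possibly nondifferentiable) dual of $\J$, so the iteration is analyzed as a proximal-point-type step: monotone increase of $h$, a telescoping identity, and Fej\'er monotonicity with respect to superlevel sets, with existence of maximizers established inside the proof (finite-valuedness of $\K$ gives supercoercivity of $\J^*$, hence attainment, hence boundedness of $(\Lambda^n)$); convergence of $(x^n)$ then follows from continuity of the proximal map. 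What each buys: your argument is more modular and textbook-standard, and transparently exhibits the step size as $1/L$; the paper's implicit-step formulation does not need differentiability of its dual, applies under the weaker hypothesis that $(\Lambda^n)$ is merely bounded (covering $\infty$-valued $\N$ via Theorem \ref{t3}), and directly yields the $o(1/n)$ rate of Corollary \ref{c2}. One small notational caveat in your write-up: as a function on all of $\H$, $\nabla g(\Lambda)=x(\Lambda)$; the identity $\nabla g(\Lambda)=\P_{\M^\perp}x(\Lambda)$ holds for the restriction of $g$ to $\M^\perp$ (the paper's $h(\Lambda)=g(\P_{\M^\perp}\Lambda)$), which is harmless here since all iterates lie in $\M^\perp$, but should be stated to keep the Lipschitz/cocoercivity constants attached to the right function.
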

In fact, if $\N$ is convex to begin with, but non-differentiable, then $\N=\N^{**}$ and the above result is new even in this case.

Note that in contrast to our formulation \eqref{convex_augmented_da_intro1}, standard augmentation approaches \cite{bertsekas2014constrained} typically add the penalty term $\|\T x\|^2$, which
results in the augmented Lagrangian
\begin{equation}\label{lagr2}
\L_\alpha(x,\lambda) = \N^{**}(x)+\langle \T x,\lambda\rangle+\frac{\alpha}{2}\|\T x\|^2.
\end{equation}
The above expression is convex for a fixed $\lambda$. However, in the context of low rank approximation, where $x$ is an $N\times N$-matrix, $\T$ becomes an $N^2\times N^2$-matrix and the corresponding minimization over $x$ becomes very slow. In contrast, the $x$ update of \eqref{convex_augmented_da_intro1} has a closed form expression (in terms of the singular value decomposition) allowing rapid computation. The price one has to pay for this is that the $\frac{\alpha}{2}\|x\|^2$ affects the functional on the subspace $\M$, albeit negligibly assuming that $\alpha$ is small.

Theorem \ref{t3intro} is a combination of Theorem \ref{t3} and its corollaries. The full version also includes $\infty$-valued functionals. Moreover, Corollary \ref{c2} provides information on the speed of convergence of $(\Lambda^{n})_{n=1}^\infty$. To compute the update \eqref{convex_augmented_da_intro1}, explicit knowledge of $\N^{**}$ is needed, which is convex (but not necessarily differentiable). However, the (non-convex) updates \eqref{dancintro1} clearly arise as the limiting case of \eqref{convex_augmented_da_intro1} as $\alpha\rightarrow 0^+,$ so when $\N^{**}$ is known the augmented scheme \eqref{convex_augmented_da_intro1}-\eqref{convex_augmented_da_intro2} can be viewed as a minor modification of the original \eqref{dancintro1}-\eqref{dancintro2}, except for that the step-length in \ref{convex_augmented_da_intro2} is fixed. In section \ref{mixture} we present a crossover algorithm where this constraint is lifted, which in our numerical section \ref{sec:numeval} is shown to have superior performance.

In Section \ref{sec:newsec3} we consider the objective function \eqref{jf1}. We give explicit expressions for the convex envelope as well as formulas for the corresponding updates \eqref{dancintro1}-\eqref{dancintro2} (Proposition \ref{p1}) and \eqref{convex_augmented_da_intro1}-\eqref{convex_augmented_da_intro2} (Proposition \ref{p3}).  Concerning dual ascent, we prove that the sequences $(x^{n})_{n=1}^\infty$ and $(\Lambda^{n})_{n=1}^\infty$ always are bounded, so that Theorem \ref{t1intro} applies (see Theorem \ref{t2}). Concerning augmented dual ascent, Theorem \ref{t3intro} applies as stated above and yields that we always find a convergent sequence whose limit point solves \eqref{convex_problem_augintro} (see Theorem \ref{t4}).

\subsection{Related work}\label{sec:related_work}
The dual ascent algorithm goes back to the 60's \cite{everett1963generalized}, and the augmented version as well \cite{hestenes1969multiplier}, although other precursors relying on Lagrange multipliers are sometimes mentioned. We refer to \cite{boyd-etal-2011} or \cite{bertsekas2014constrained} for a more thorough overview of early results. Chapter 2 of the latter reference is devoted to the minimization of a non-convex functional $\N(x)$ under the constraint $\T(x)=0$, using dual ascent schemes similar to \eqref{dancintro1}-\eqref{dancintro2} and \eqref{convex_augmented_da_intro1}-\eqref{convex_augmented_da_intro2}, relying on Lagrangians of the type \eqref{lagr1}, but only local convergence results are provided, assuming that one starts near a minima with positive Hessian. As noted in Section~\ref{sec:intro} dual ascent is in fact a special case of the \emph{projected subgradient method}, see e.g. \cite{boyd2006subgradient}, Section 4.2 of \cite{bertsekas2011incremental} or Chapter 6 (available online only) of \cite{bertsekas2009convex}, for introduction and recent convergence results.

In the 90's these methods were extensively studied by Luo and Tseng \cite{luo1993convergence,luo1992linear,tseng1990dual}, mainly focusing on the convex situation and convergence under various smoothness assumptions on $\N$. They also began to study coordinate splitting methods \cite{luo1992convergence}, a work that was subsequently extended also to non-convex functionals \cite{tseng2001convergence}, albeit working with weaker concepts of convexity such as pseudoconvexity, quasiconvexity, hemivariate. A more recent contribution in this direction is \cite{goldstein2014fast}, which considers convergence of ADMM in the convex setting, a work that has inspired the proof of Theorem \ref{t3intro}.

There is a large body of work dealing with low rank approximation and optimization.
One of the earliest results is the Eckart-Young-Schmidt theorem \cite{schmidt1908,eckart-young-1936}
which gives a closed-form solution for the best least squares approximation of a given matrix $F$ with a matrix of specified maximal rank. More precisely, given a singular value decomposition (SVD) of
$F$ the best rank-$r$ approximation can be found by setting all but the first $r$ singular values to $0$.
In many applications (e.g. \cite{markovsky-2008} and the references therein) it is of interest to add additional constraints and penalties that model any additional prior knowledge we may have about the solution.
Adding any convex constraint to the formulation may seem like a minor change, however since the original formulation is not convex this makes the problem much more complicated and in practice iterative approaches (see e.g. \cite{lemmerling}) without optimality guarantees have to be applied.

In order to develop more flexible methods that allow incorporation of application specific priors researches have started to consider convex formulations for rank approximation. A popular heuristic is to replace the rank function with the so called nuclear norm \cite{recht-etal-siam-2010,candes-etal-acm-2011,olsson-oskarsson-scia-2009,angst-etal-iccv-2011}. Since this formulation penalizes all the singular values, not just the small ones, it has a shrinkage bias making it sensitive to high levels of noise and missing data (e.g. \cite{larsson2014rank,olsson-oskarsson-scia-2009}).

The original motivation for using the nuclear norm was given in
\cite{fazel-etal-acc-2015} where it is shown that the nuclear norm is the convex envelope of the rank function on the set $\{A; \sigma_1(A) \leq 1\}$, where $\sigma_1(A)$ is the largest singular value of $A$.
The constraint $\sigma_1(A) \leq 1$ is artificial and added since the convex envelope on the whole domain would simply be the zero function. Very recently it has been observed \cite{larsson2014rank,larsson2015convex} that a significantly stronger relaxation can be derived if one considers, not just the rank function, but also a least squares penalty term $\|A-F\|^2_2$.
The penalty term, which replaces the $\sigma_1(A) \leq 1$ constraint, effectively restricts the feasible domain to a neighborhood around $F$. As a consequence the obtained envelope, see \eqref{probmod}, is much more accurate in this region. In contrast to the nuclear norm it does not penalize singular values larger than $\sigma_0$ and therefore does not exhibit the same shrinking bias.

Because of the difficulty of achieving guaranteed global optimality local approaches are often employed.
If the rank of the sought matrix is known, bilinear parameterizations where the matrix is factored into $A=PL^T$ are used.
The easiest approach is to alternatively reestimate $P$ and $L$, e.g. \cite{markovsky-2008}. Buchanan and Fitzgibbon \cite{buchanan-fitzgibbon-cvpr-2005} showed that this method often exhibit very slow convergence. Instead they proposed a damped gauss-newton update that jointly optimizes over both factors. Hong and Fitzgibbon \cite{hong2015secrets} provide a
unified theoretical framework and experimental comparisons of many of these local factorization methods
with least squares residuals.

Finally, for the particular case of Hankel matrices and frequency estimation, the problem is severely non-convex with many local minima near the (constrained) global minima, see e.g. Figure 1 in \cite{lemmerling} and the surrounding discussion, or Section 3 in \cite{gillard}. To deal with this situation \cite{lemmerling,gillard} propose to also iterate over the initial guess. Section 4 of the (recent) article \cite{gillard} also contain a brief discussion of state of the art methods for the Hankel low rank approximation problem (sometimes called Hankel SLRA), where it is stated that ``None of these methods have the theoretical
property of convergence though and hence the construction of reliable methods for solving
the Hankel SLRA problem remains a wide open problem.''

\section{Preliminaries}\label{sec2}
In the entire paper $\H$ will denote a finite dimensional Hilbert space. By a proper fuctional we mean a $(-\infty,\infty]-$valued function on $\H$ which is not identically equal to $\infty$. Let $\J$ be a convex lower semi-continuous (l.s.c.) proper functional on $\H$.
The subdifferential of $\J$ is denoted $\partial \J(x)$ and consists of all $v \in \H$ that fulfill
\begin{equation}
\J(y) \geq \J(x) + \langle y-x,v \rangle
\end{equation}
for all $y \in \H$.
Similarly, in the concave case the subdifferential consists of all $v$ fulfilling the opposite inequality. A vector in $\partial \J(x)$ is denoted $\nabla \J(x)$.
By $\J^*$ we mean the Fenchel conjugate
\begin{equation}
\J^*(y) = \max_x \langle x,y \rangle - \J(x)
\end{equation}
(since there is no risk of confusion, we use this notation also for adjoints).
Note that the double Fenchel conjugate
$\J^{\ast \ast}$ is the l.s.c. convex envelope of $\J$ (see e.g. Proposition 13.39 \cite{bauschke2011convex}).

Let $\M\subset\H$ be a closed linear subspace and consider the problem \begin{equation}\label{probrank3}
\argmin_{x\in\M} \J(x).
\end{equation}
Introduce an orthonormal basis $\{E_j\}_{j=1}^{\dim\M}$ for $\M$, and similarly an orthonormal basis $\{\tilde{E}_j\}_{j=1}^{\dim \M^\perp}$ for $\M^\perp$. If $\T:\H\rightarrow \mathbb{C}^{\dim\M^\perp}$ is the operator whose output $\T(x)$ is the coefficients of $x$ in the basis $\{\tilde{E}_j\}_{j=1}^{\dim \M^\perp}$, then \eqref{probrank3} can be written as \begin{equation}\label{probrank2}
\argmin_{\T(x)=0} \J(x),
\end{equation}
(in fact, any operator with $\mathsf{Ker} \T=\M$ would do). Following the method of multipliers (see e.g. \cite{boyd2006subgradient}, Chapter 6 and 7), we introduce the Lagrangian
$$\J(x)+\langle \T(x),\lambda\rangle$$ where $\lambda\in \mathbb{C}^{\dim\M^{\perp}}$ is the so called Lagrange multiplier and the scalar product is the canonical one in $\mathbb{C}^{\dim\M^{\perp}}$. Since $\langle \T(x),\lambda\rangle=\langle x,\T^*(\lambda)\rangle$, and $\mathsf{Ran} \T^*=(\mathsf{Ker} \T)^{\perp}=\M^\perp$, we may equivalently consider the \textit{restricted Lagrangian}
\begin{equation}\label{m}
\L(x,\Lambda) =  \J(x) + \langle \Lambda, x \rangle, \quad \Lambda \in \M^\perp,
\end{equation}
where $\Lambda$ replaces $\T^*(\lambda)$. The dual function is then \begin{equation}\label{defg}g(\Lambda)=\min_x \L(x,\Lambda)=-\J^\ast(-\Lambda).\end{equation}  In particular, $g$ is concave.
The dual ascent consists in applying the projected subgradient method to the function $g$ and the subset $\M^{\perp}$, (see \cite{boyd-etal-2011} Section 2.1 and \cite{boyd2006subgradient}, Chapter 6 and 7). More precisely, if we let $\P_{\M^\perp}(x)$ denote the projection of $x$ onto the subspace $\M^\perp$, i.e. $\P_{\M^\perp}(x)=\sum_{j=1}^{\dim\M^\perp}\langle x,\tilde{E}_j\rangle \tilde{E}_j$, then the dual ascent algorithm for  \eqref{probrank2} reads
\begin{equation}\label{da}
\begin{dcases}
x^{n+1} &= \argmin_{x}\J(x) + \langle \Lambda^n, x \rangle \\
\Lambda^{n+1} &= \Lambda^{n} + \alpha_n \P_{\M^\perp}(x^{n+1}),
\end{dcases}
\end{equation}
for some sequence of predetermined parameters $\alpha_n$, (see e.g. Ch. 6 and 7 in \cite{boyd2006subgradient}), and $\Lambda^0=0$. To better see the connection between the updates and the subgradient of $g$, we remind the reader that \begin{equation}
\label{sd}y\in\partial \J(x)\Leftrightarrow \J(x)+\J^\ast(y)=\scal{x,y}\Leftrightarrow x\in\partial\J^\ast(y)
\end{equation}
(see e.g. \cite{bauschke2011convex}, Theorem 16.23). In terms of $g$ and $\Lambda$, \eqref{sd} reads
\begin{equation}
\label{sd1}-\Lambda\in\partial \J(x)\Leftrightarrow \J(x)+\scal{x,\Lambda}=g(\Lambda)\Leftrightarrow x\in \partial g(\Lambda).
\end{equation}
Since clearly $-\Lambda^n\in\partial \J(x^{n+1})$ in \eqref{da}, it follows that $x^{n+1}\in\partial g(\Lambda^n)$ and hence the update for $\Lambda$ can be rewritten
$$\Lambda^{n+1} = \Lambda^{n} +\alpha_n \P_{\M^\perp}\nabla g(\Lambda^{n}).$$ The iterates thus ascend on the concave hill $g$, at least if the step length $\alpha_n$ is not too big.

\section{Non-convex cost functionals}\label{sec4}
Suppose now that $\N$ is a non-convex proper l.s.c. functional that we wish to minimize over some subspace $\M$ of the finite dimensional Hilbert space $\H$.
One may then still attempt the dual ascent minimization scheme,
\begin{align}\label{danc1}
x^{n+1} &= \argmin_{x}\N(x) + \langle \Lambda^n, x \rangle \\
\label{danc2}\Lambda^{n+1} &= \Lambda^{n} + \alpha_n\P_{\M^\perp}(x^{n+1}),
\end{align}
albeit with little hope of being able to prove convergence or, in case it converges, proving that we have found a global minimum.

Since $\N^{***}=\N^*$ (see e.g. Prop. 13.14 in \cite{bauschke2011convex}) we have
\begin{equation}
\max_x \N(x)+\langle \Lambda,x\rangle = -\N^*(-\Lambda)
= -\N^{***}(-\Lambda) = \max_x \N^{**}(x)+\langle \Lambda,x\rangle.
\label{eq:dual_func_equality}
\end{equation}
That is, the dual functions obtained using $\N$ and its convex envelope $\N^{**}$ respectively, coincide. Furthermore, it is a simple observation that \begin{equation} \label{IR_minimum} \argmin_{x}\N(x) + \langle \Lambda, x \rangle \subset \argmin_{x}\N^{\ast \ast}(x) + \langle \Lambda, x \rangle. \end{equation}
Hence the sequence generated by \eqref{danc1}-\eqref{danc2} is contained among the sequences that can be generated by a dual ascent scheme using $\N^{\ast \ast}$. This makes it plausible that it should in fact converge to an optimum of $\min_{x\in \M}\N^{\ast \ast}(x)$.

While $\N(x)$ and $\N(x)^{**}$ typically attain the same minimal values it is important to realize that if we restrict the functionals to $\M$ this may no longer hold.
The convex envelope over $\M$, i.e. $(\N|_{\M})^{\ast\ast}$, does not necessarily coincide with $\N^{\ast\ast}$ restricted to ${\M}$, (i.e. $\N^{\ast\ast}|_{\M}$), see Figure \ref{convex_envelope_illustration}. The latter is clearly a convex function which lies below $\N$, so we do have $$(\N|_{\M})^{\ast\ast}\geq (\N^{\ast\ast})|_{\M}.$$ We will give conditions under which the dual ascent scheme \eqref{danc1}-\eqref{danc2} converges, and show that the convergence point is then typically a solution of $(\N^{\ast\ast})|_{\M}$.
Note that computing $(\N|_{\M})^{\ast\ast}$ is just as difficult as solving the original non-convex problem since
\begin{equation}
(\N|_\M)^*(0) = \max_{x \in \M} \langle x,0 \rangle - \N(x) = - \min_{x \in \M} \N(x).
\end{equation}
and therefore not feasible to work with.

\subsection{Feasible functionals}

In order for the above discussion to make sense, we have to assume that $\N$ grows fast enough in all directions so that the above minimizers exist. We therefore restrict attention to the following class of functions, (in the notation of \cite{bauschke2011convex}, functionals that satisfy \eqref{lok} are called supercoercive); \begin{definition}\label{deffeas}
A l.s.c. proper functional $\N$ will be called feasible if it is bounded below and \begin{equation}\label{lok}\lim_{\|x\|\rightarrow\infty}\frac{\N(x)}{\|x\|}=\infty.\end{equation}
\end{definition}

\begin{figure*}\label{fig1}
\centering
\includegraphics[width=0.8\linewidth, trim=0.0cm -0.5cm 0.0cm 0.0cm]{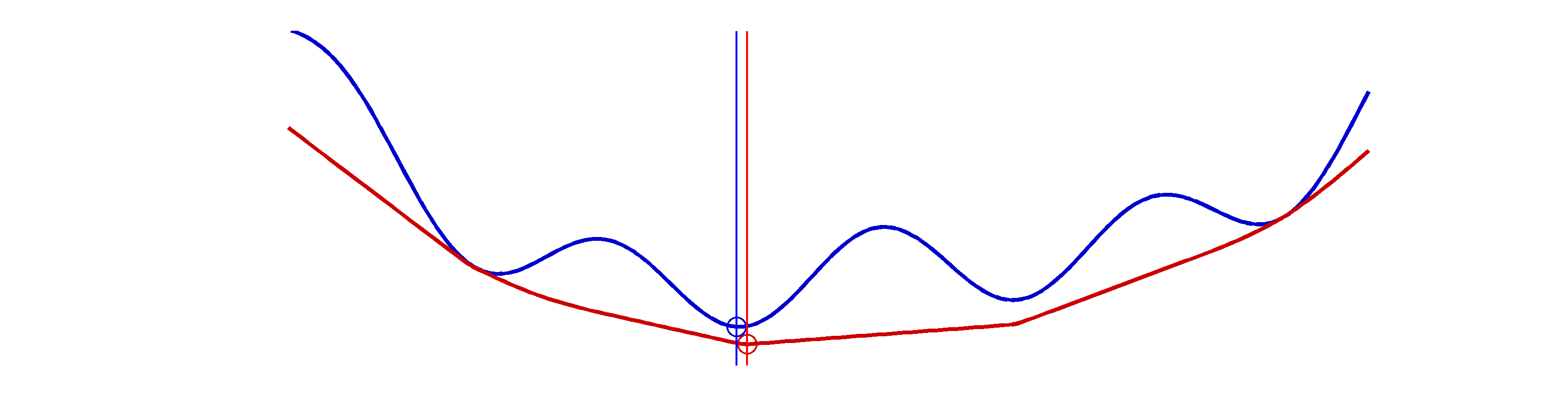}\\
\includegraphics[width=0.48\linewidth, trim=1.5cm 1.5cm 1.5cm 1.5cm]{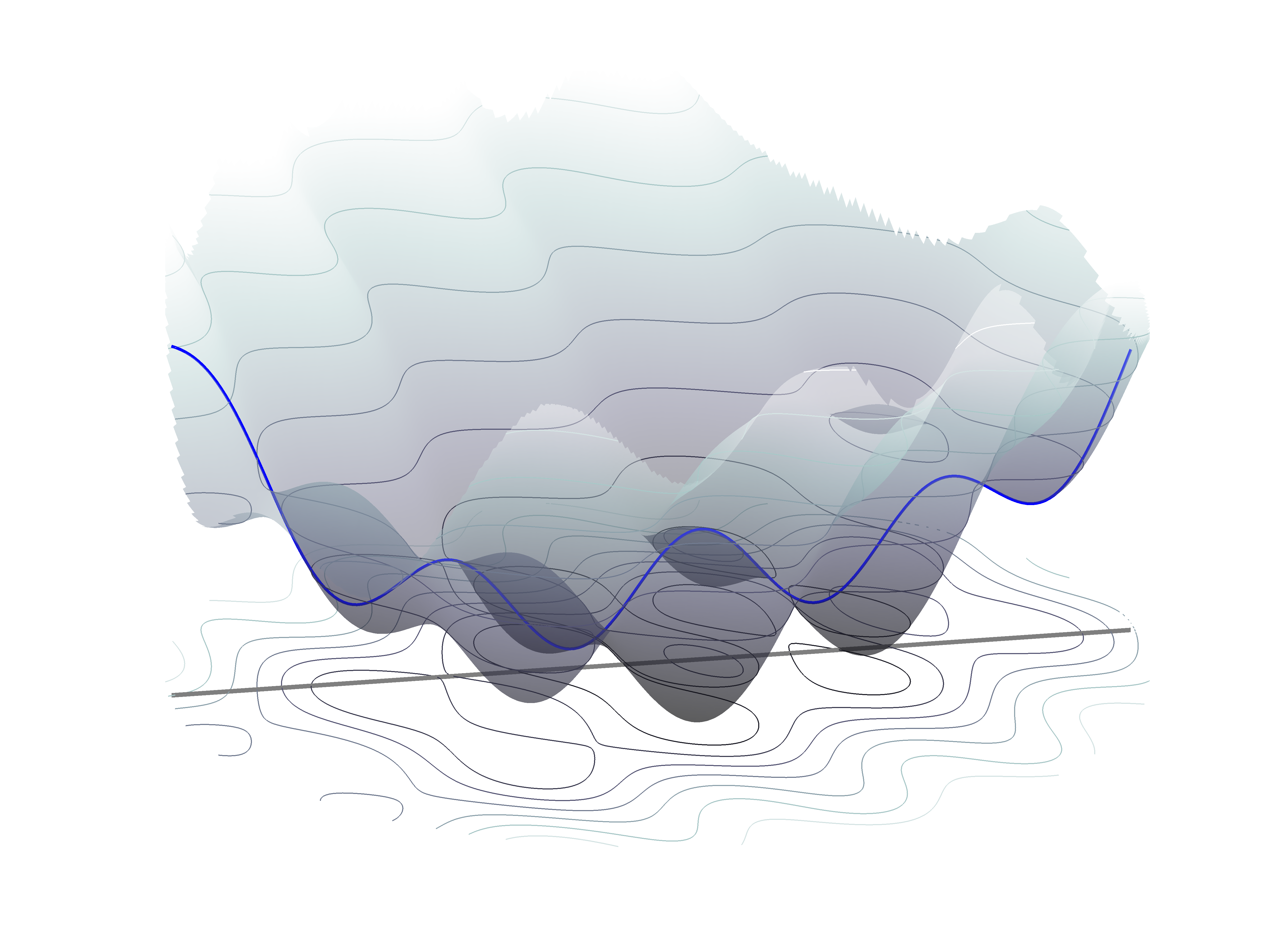}
\includegraphics[width=0.48\linewidth, trim=1.5cm 1.5cm 1.5cm 1.5cm]{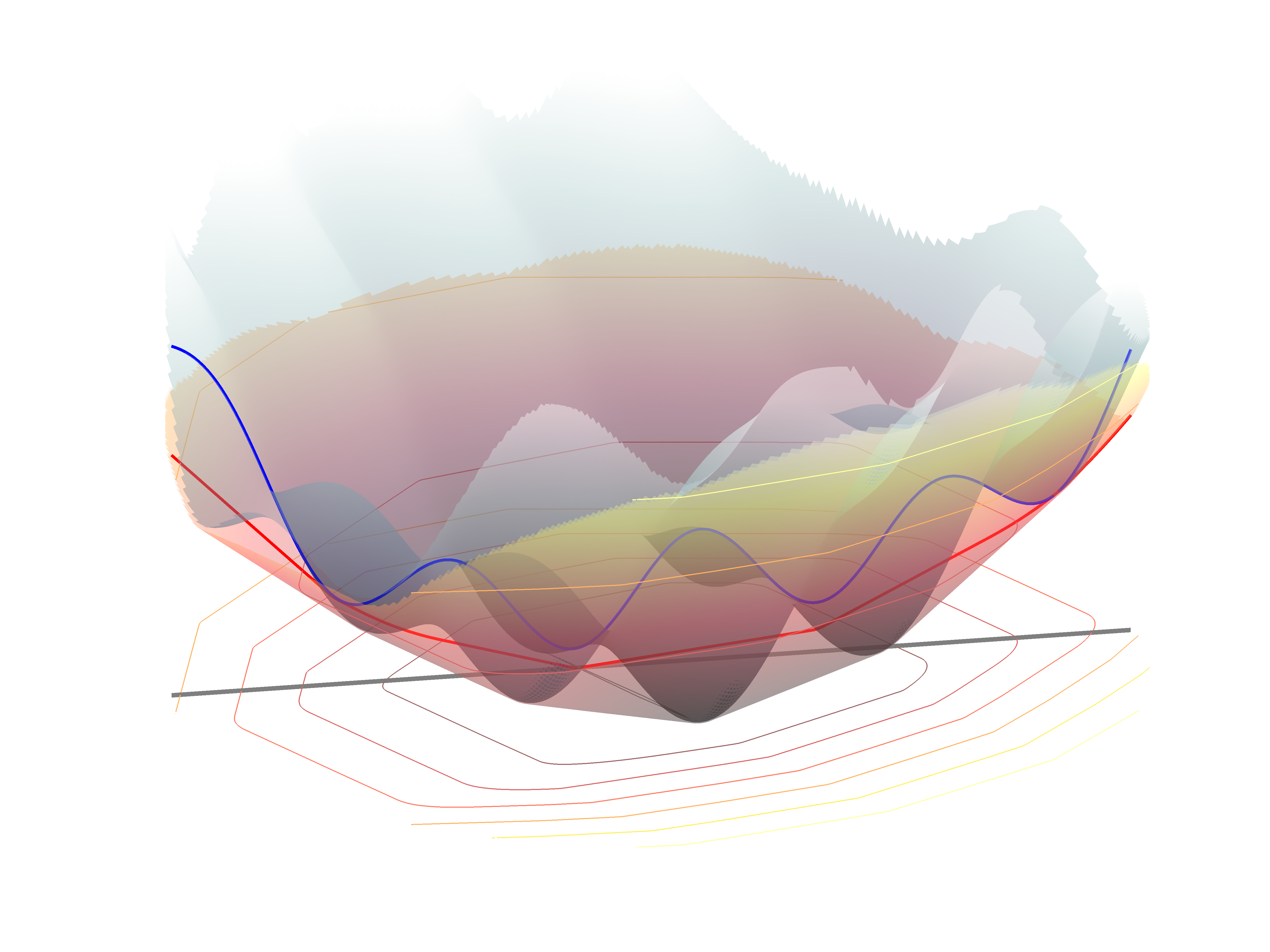}
\caption{Illustration of the constrained optimization. The bottom left panel shows a non-convex function along with its level sets. The gray line represents the constraint, and the blue curve the values of the constrained function. The bottom right panel shows the same setup, but here the convex envelope is shown as well in red/yellow. The values of the convex envelope along the constraint is shown in the red curve. The top figure show a one-dimensional plot of the values of the original function (blue) and the convex envelope (red) evaluated on the constraint. The respective minima are shown by circles and highlighted by the vertical lines. Note that they are located close to each other, but that they are not identical despite the fact that the global minimum for the original function and its convex envelope coincide. \label{convex_envelope_illustration}}
\end{figure*}

In the remaining part of this section we establish some results concerning feasibility of conjugate and double conjugate functions that will be useful later on. The next proposition summarize properties of $\N^*$.

\begin{proposition}\label{p2}
If $\N$ is feasible then $\N^*$ is a continuous convex functional which is bounded below.
Furthermore, both $\N^*$ and its subdifferentials are bounded on bounded subsets of $\H$.
Finally, if $\N$ is finite-valued, then $\N^*$ is feasible. In particular it then has bounded level sets.
\end{proposition}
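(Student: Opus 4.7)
The plan is to reduce everything to the convex case by exploiting $\N^{\ast}=\N^{\ast\ast\ast}=(\N^{\ast\ast})^{\ast}$, so that statements about $\N^{\ast}$ can be read off from the l.s.c.\ convex envelope $\N^{\ast\ast}$. The first step is to check that feasibility descends from $\N$ to $\N^{\ast\ast}$: convexity and lower semicontinuity hold by construction, the lower bound on $\N$ carries over to $\N^{\ast\ast}$ (any convex minorant is bounded below by $\inf\N$), and supercoercivity transfers through affine minorants. Indeed, for any $A>0$ feasibility yields $\N(x)\geq A\|x\|-B(A)$ globally, and since $A\|x\|-B(A)=\sup_{\|u\|\leq A}\big(\langle x,u\rangle-B(A)\big)$ is a pointwise supremum of affine minorants of $\N$, the same affine functions minorize $\N^{\ast\ast}$, giving $\N^{\ast\ast}(x)\geq A\|x\|-B(A)$ and hence $\N^{\ast\ast}(x)/\|x\|\to\infty$.

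With $\N^{\ast\ast}$ feasible, convexity of $\N^{\ast}$ is automatic as a pointwise supremum of affine maps. Finiteness of $\N^{\ast}(y)$ for each $y$ is the dual face of supercoercivity: the map $x\mapsto\langle x,y\rangle-\N^{\ast\ast}(x)$ tends to $-\infty$ as $\|x\|\to\infty$, so its supremum is attained and finite. A finite-valued convex function on a finite-dimensional space is automatically continuous, which gives continuity on $\H$. The lower bound comes from $\N^{\ast}(y)\geq\langle x_0,y\rangle-\N^{\ast\ast}(x_0)$ for any $x_0\in\mathrm{dom}(\N^{\ast\ast})$; in the finite-valued setting one may in addition take $x_0=0$ to get the clean estimate $\N^{\ast}(y)\geq -\N(0)$.

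Boundedness of $\N^{\ast}$ on bounded subsets is then immediate from continuity plus compactness of closed bounded sets in finite dimensions. For the subdifferential, I would use the Fenchel reciprocity $v\in\partial\N^{\ast}(y)\Leftrightarrow y\in\partial\N^{\ast\ast}(v)\Leftrightarrow v\in\argmax_{x}\big(\langle x,y\rangle-\N^{\ast\ast}(x)\big)$, and then localize the argmax using supercoercivity: for $\|y\|\leq M$, pick $R=R(M)$ with $\N^{\ast\ast}(x)\geq 2M\|x\|-C$ for $\|x\|>R$; since any maximizer $v$ satisfies $\langle v,y\rangle-\N^{\ast\ast}(v)\geq -\N^{\ast\ast}(0)$, it must obey $\|v\|\leq R$, which is the required uniform bound.

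Finally, to see that $\N^{\ast}$ is itself feasible when $\N$ is finite-valued, the only remaining point is supercoercivity of $\N^{\ast}$. This follows from the standard dual equivalence (see Bauschke--Combettes) between $\mathrm{dom}(\N^{\ast\ast})=\H$, inherited from $\N$ being finite-valued, and supercoercivity of $(\N^{\ast\ast})^{\ast}=\N^{\ast}$; lower semicontinuity, properness and lower boundedness have already been established. Bounded level sets then follow at once, since any $y_n$ with bounded $\N^{\ast}(y_n)$ and $\|y_n\|\to\infty$ would force $\N^{\ast}(y_n)/\|y_n\|\to 0$, contradicting supercoercivity. The only step that is not pure Fenchel bookkeeping is the subdifferential bound, which I expect to be the main obstacle: it is the one place where the supercoercive growth rate, rather than mere finiteness, is actually used, via localization of the argmax.
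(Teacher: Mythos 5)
Your proposal is correct, but it executes several steps differently from the paper, so a comparison is worthwhile. The paper works directly with $\N$: it bounds $\N^*$ on bounded sets via the radial infimum $\mu(r)=\inf_{\|x\|=r}\N(x)$, deduces continuity from local boundedness of a convex function, disposes of the subdifferential bound by citing Theorem 16.17 of \cite{bauschke2011convex}, and proves supercoercivity of $\N^*$ in the finite-valued case by a direct compactness argument (extract $\Gamma=\lim \Lambda^{n_k}/\|\Lambda^{n_k}\|$ and apply Fenchel--Young at the test point $2c\Gamma$ to contradict $\limsup \N^*(\Lambda^{n_k})/\|\Lambda^{n_k}\|=c$). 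You instead route everything through $\N^{**}$, first establishing its feasibility via affine minorants --- this is essentially the paper's Proposition \ref{p4}, which the paper proves separately (with the convex minorant $\max(B,B+c(\|x\|-R))$ rather than a supremum of affine functions) and does not need for this proposition; you then obtain continuity first (finite-valued convex in finite dimensions) and boundedness on bounded sets afterwards by compactness, which is the reverse order but the same ingredients. The two genuinely different choices are: (i) you prove the subdifferential bound by hand, via the Fenchel reciprocity $v\in\partial\N^*(y)\Leftrightarrow v\in\argmax_x\big(\langle x,y\rangle-\N^{**}(x)\big)$ and localization of the argmax using supercoercive growth, where the paper just cites a textbook theorem --- your version is self-contained and makes visible exactly where the growth condition is used; (ii) conversely, for supercoercivity of $\N^*$ you cite the standard duality ``everywhere-finite biconjugate $\Leftrightarrow$ supercoercive conjugate'' (valid in finite dimensions; in \cite{bauschke2011convex} it is phrased with ``bounded on bounded sets'', which coincides with finiteness here), where the paper gives the explicit argument. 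Two small caveats, neither fatal: your constant lower bound $\N^*(y)\geq-\N(0)$ (and $-\N^{**}(0)$ in the localization) tacitly assumes $0\in\mathrm{dom}\,\N$, and in the general proper case your affine estimate $\langle x_0,y\rangle-\N^{**}(x_0)$ gives only properness rather than a uniform lower bound --- but the paper's own proof has exactly the same tacit assumption; and in the localization step the radius $R$ needs a harmless enlargement (or a one-line computation) before you can literally conclude $\|v\|\leq R$ for every maximizer.
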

\begin{proof}
That $\N^*$ is bounded below follows by the trivial estimate $\N^*(\Lambda)\geq -\N(0)$ for all $\Lambda$. We now consider bounds from above.

Given $r\geq 0$ set $\mu(r)=\inf_{\|x\|=r}\N(x)$, which by assumption is bounded below and satisfies $\lim_{r\rightarrow\infty}\mu(r)/r=\infty$. Fix $R>0$ and suppose that $\Lambda$ satisfies $\|\Lambda\|\leq R$.
Then $$\N^*(\Lambda)=\sup_x \scal{x,\Lambda}-\N(x)\leq \sup_r r\left(R-\frac{\mu(r)}{r}\right).$$
Since the second term of the product will be negative for sufficiently large $r$,
the supremum is clearly finite and independent of $\Lambda$ (as long as $\|\Lambda\|\leq R$), so $\N^*$ is bounded on bounded subsets of $\H$. That $\N^*$ is convex is well-known and easy to check, (see e.g. Proposition 13.11 in \cite{bauschke2011convex}). Since a locally bounded convex function is continuous (
Corollary 8.30 in \cite{bauschke2011convex}) it follows that $\N^*$ is continuous. Theorem 16.17 in the same reference also yields the statement concerning subdifferentials.

Now let $\N$ be finite-valued and suppose that $\N^*$ is not supercoercive. Pick a sequence $(\Lambda^n)_{n=1}^\infty$ such that $$\limsup_{n\rightarrow\infty} \frac{\N^*(\Lambda^n)}{\|\Lambda^n\|}=c<\infty$$ and $\lim_{n\rightarrow\infty}\|\Lambda_n\|=\infty$. Since the unit ball is compact in finite dimensional Hilbert spaces, there exists a subsequence $(\Lambda^{n_k})_{k=1}^\infty$ such that $\Lambda^{n_k}/\|\Lambda^{n_k}\|$ converges to some $\Gamma$ with $\|\Gamma\|=1$. By the Fenchel-Young inequality (Proposition 13.13 \cite{bauschke2011convex}), we then have
$$\limsup_{k\rightarrow\infty} \frac{\N^*(\Lambda^{n_k})}{\|\Lambda^{n_k}\|}\geq \limsup_{k\rightarrow\infty} \frac{\scal{2c\Gamma,\Lambda^{n_k}}-\N(2c\Gamma)}{\|\Lambda^{n_k}\|}=2c, $$
which is a contradiction.
\end{proof}

The bounded level sets of $\N^*$ will be an important condition in subsequent results, and it is therefore important to realize that this property may fail even in the convex case, if $\N$ assumes the value $\infty.$ Pick for example $\H=\mathbb{R}$ and $$\N(x)=\left\{\begin{array}{ll}
                                                                                                    x^2/2 & x\geq 0 \\
                                                                                                    \infty & x<\infty
                                                                                                  \end{array}\right.
$$ for which
$$\N^*(x)=\left\{\begin{array}{ll}
                                                                                                    x^2/2 & x\geq 0 \\
                                                                                                    0 & x<\infty
                                                                                                  \end{array}\right.
$$
In the next proposition we collect properties of $\N^{**}$.

\begin{proposition}\label{p4}
If $\N$ is feasible, then so is $\N^{**}$.
\end{proposition}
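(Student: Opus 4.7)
The plan is to verify each clause of Definition \ref{deffeas} for $\N^{**}$. Since $\N^{**}$ is the l.s.c.\ convex envelope of $\N$, lower semi-continuity and convexity come for free. The two remaining items are that $\N^{**}$ is proper and bounded below (easy) and that $\N^{**}$ is supercoercive (the only real content).

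For the easy items, I would use two elementary observations. First, $\N^{**}\leq \N$ pointwise; since $\N$ is proper there is an $x_0$ with $\N(x_0)<\infty$, hence $\N^{**}(x_0)<\infty$, so $\N^{**}$ is not identically $+\infty$. Second, if $c$ is a lower bound for $\N$, then the constant function $c$ is a convex l.s.c.\ minorant of $\N$, and since $\N^{**}$ is the supremum of all such minorants we obtain $\N^{**}\geq c$. In particular $\N^{**}$ is real-valued wherever it is not $+\infty$, so it is proper.

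For supercoercivity, the key input is Proposition \ref{p2}: since $\N$ is feasible, $\N^{*}$ is bounded on bounded subsets of $\H$. Using this together with the Fenchel--Young inequality applied to the pair $(\N^{**},\N^{***})=(\N^{**},\N^{*})$ gives, for every $x,y\in\H$,
\begin{equation}
\N^{**}(x)\;\geq\;\langle x,y\rangle-\N^{*}(y).
\end{equation}
Fix $M>0$ and set $C_M=\sup_{\|y\|\leq M}\N^{*}(y)$, which is finite by Proposition \ref{p2}. For $x\neq 0$, choose $y=M\,x/\|x\|$; this gives
\begin{equation}
\N^{**}(x)\;\geq\; M\|x\|-C_M.
\end{equation}
Hence $\N^{**}(x)/\|x\|\geq M-C_M/\|x\|$, so $\liminf_{\|x\|\to\infty}\N^{**}(x)/\|x\|\geq M$. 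Since $M>0$ was arbitrary, the limit is $+\infty$, proving supercoercivity.

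I do not anticipate a genuine obstacle: the whole argument is essentially a transcription of the duality ``$\N^{*}$ bounded on bounded sets'' $\Longleftrightarrow$ ``$\N^{**}$ supercoercive''. The only subtlety worth flagging is that one must apply Fenchel--Young to $\N^{**}$ rather than to $\N$, which is legitimate because $\N^{***}=\N^{*}$ (cited earlier in the section). Everything else is bookkeeping.
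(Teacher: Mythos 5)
Your proof is correct, but it takes a different route from the paper's. For supercoercivity the paper argues entirely on the primal side: from \eqref{lok} it extracts, for each $c>0$, an explicit convex l.s.c.\ minorant of $\N$ of the form $\max(B,B+c(\|x\|-R))$, and since $\N^{**}$ dominates every such minorant it inherits linear growth of slope $c$, with $c$ arbitrary. You instead pass to the dual side: you invoke Proposition \ref{p2} (that $\N^*$ is bounded on bounded sets when $\N$ is feasible) and then use $\N^{**}(x)\geq\langle x,y\rangle-\N^{*}(y)$ with the test point $y=M x/\|x\|$ to get $\N^{**}(x)\geq M\|x\|-C_M$; note this inequality is simply the definition of $\N^{**}$ as the conjugate of $\N^{*}$, so the appeal to Fenchel--Young and to $\N^{***}=\N^{*}$ is not even needed. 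Both arguments are sound and of comparable length. Your version makes the duality ``$\N^{*}$ locally bounded $\Leftrightarrow$ $\N^{**}$ supercoercive'' explicit and reuses work already done in Proposition \ref{p2}, at the cost of depending on that proposition; the paper's version is self-contained and, incidentally, avoids a small point your route must silently handle (the pairing $\langle x,y\rangle$ must be real for your choice of $y$, which it is since $y$ is a positive multiple of $x$). The easy clauses (properness, boundedness below, l.s.c.) are handled essentially as in the paper.
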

\begin{proof}
$\N^{**}$ is l.s.c since it is the Fenchel conjugate of $\N^*$. As already noted, $\N^{**}$ is the l.s.c. convex envelope of $\N$, and hence it is bounded below since $\N$ is, and proper since $\N^{**}\leq \N$. To show that $\N^{**}$ satisfies \eqref{lok}, let $c>0$ be arbitrary and pick $R$ such that $\N(x)\geq c\|x\|$ whenever $\|x\|\geq R$. Let $B$ be a bound from below for $\N$ and note that $$\N(x)\geq \max(B,B+c(\|x\|-R)).$$
Since the right hand side side is a convex lower bound on $\N$ it follows that $$\limsup_{\|x\|\rightarrow\infty}\frac{\N^{**}(x)}{\|x\|}\geq \limsup_{\|x\|\rightarrow\infty}\frac{\max(B,B+c(\|x\|-R))}{\|x\|}=c$$
from which the desired conclusion is immediate, as $c$ was arbitrary.
\end{proof}


\section{Convergence of dual ascent}\label{secconv}

Let $\N$ be a non-convex proper l.s.c. functional on some (finite dimensional) Hilbert space $\H$, that we wish to minimize over a subspace $\M$. In this section we provide a general convergence result for the dual ascent scheme \eqref{danc1}-\eqref{danc2}. This result only gives convergence of a subsequence, which can be remedied by considering an augmented variation of the algorithm. This is done in Section \ref{secconvII}.

Let $(x^{n})_{n=1}^\infty$ and $(\Lambda^n)_{n=1}^\infty$ be given by the dual ascent scheme \eqref{danc1}-\eqref{danc2} and set
\begin{equation}\label{tgen}n_k=\argmax_{0\leq n\leq k}-\N^*(-\Lambda^n).\end{equation}

\begin{theorem}\label{t1}
Let $\alpha_n$ be a sequence satisfying $0< \alpha_n\leq 1$ and \begin{equation}\label{ak}\lim_{N\rightarrow\infty}{\sum_{n=1}^N\alpha_n^2}\Big/{\sum_{n=1}^N\alpha_n}=0.\end{equation}
Let $\N$ be feasible and suppose that the sequences $(x^{n})_{n=1}^\infty$ and $(\Lambda^{n})_{n=1}^\infty$ (given by \eqref{danc1}-\eqref{danc2}) are bounded. Given any convergent subsequences $(x^{n_{k_j}})_{k=1}^\infty$ and $(\Lambda^{n_{k_j}})_{k=1}^\infty$ with limits $x^\star$ and $\Lambda^\star$, we have that \begin{equation}\label{convex_problem4}\N^{**}(x^\star)+\scal{x^\star,\Lambda^{\star}}=\inf_{x\in\M}\N^{**}(x).\end{equation} Moreover, if $\argmin_{x}\N^{**}(x)+\scal{x,\Lambda^\star}$ has a unique solution, then $x^\star$ lies in $\M$ and is a solution to \begin{equation}\label{convex_problem}\argmin_{x\in\M}\N^{**}(x).\end{equation}
\end{theorem}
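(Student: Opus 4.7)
The plan is to read \eqref{danc1}--\eqref{danc2} as a projected subgradient ascent on the concave dual $g(\Lambda)=-\N^{*}(-\Lambda)$ restricted to $\M^{\perp}$, and to combine the resulting convergence of dual values along the running-argmax subsequence $(n_{k})$ with the strong duality identity
\begin{equation*}
\sup_{\Lambda\in\M^{\perp}}g(\Lambda)\;=\;\inf_{x\in\M}\N^{**}(x).
\end{equation*}
The latter follows from Fenchel--Rockafellar duality applied to the convex, lower semi-continuous, supercoercive functional $\N^{**}$ (Proposition \ref{p4}) with the linear constraint $x\in\M$; in particular, a maximizer $\hat\Lambda\in\M^{\perp}$ and a primal optimum $\bar x\in\M$ both exist.

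First I would run the standard subgradient-ascent estimate. Since $x^{n+1}\in\partial g(\Lambda^{n})$ by \eqref{sd1}, expanding $\|\Lambda^{n+1}-\hat\Lambda\|^{2}$, using $\Lambda^{n}-\hat\Lambda\in\M^{\perp}$ to drop $\P_{\M^{\perp}}$ from the cross-term, and applying the concave subgradient inequality gives
\begin{equation*}
\|\Lambda^{n+1}-\hat\Lambda\|^{2}\le\|\Lambda^{n}-\hat\Lambda\|^{2}+2\alpha_{n}\bigl(g(\Lambda^{n})-g(\hat\Lambda)\bigr)+\alpha_{n}^{2}\|x^{n+1}\|^{2}.
\end{equation*}
Summing, invoking the boundedness hypothesis on $(x^{n+1})$, and noting that the step-size condition forces $\sum_{n}\alpha_{n}=\infty$, a telescoping argument along the running-argmax subsequence yields $g(\Lambda^{n_{k}})\to\sup_{\Lambda\in\M^{\perp}}g(\Lambda)$. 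By continuity of $g$ (Proposition \ref{p2}) the limit $\Lambda^{\star}$ satisfies $g(\Lambda^{\star})=\sup_{\Lambda\in\M^{\perp}}g(\Lambda)=\inf_{x\in\M}\N^{**}(x)$.

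For the limit passage, note that $x^{n_{k_{j}}}$ minimizes $\N^{**}(x)+\langle x,\Lambda^{n_{k_{j}}-1}\rangle$ by \eqref{IR_minimum} applied to \eqref{danc1}. Testing against an arbitrary $y\in\M$, using $\langle y,\Lambda^{n_{k_{j}}-1}\rangle=0$, and rewriting the inner product in terms of $\Lambda^{n_{k_{j}}}$ instead of $\Lambda^{n_{k_{j}}-1}$ (which introduces the discrepancy $\alpha_{n_{k_{j}}-1}\|\P_{\M^{\perp}}(x^{n_{k_{j}}})\|^{2}$) gives
\begin{equation*}
\N^{**}(x^{n_{k_{j}}})+\langle x^{n_{k_{j}}},\Lambda^{n_{k_{j}}}\rangle\;\le\;\N^{**}(y)+\alpha_{n_{k_{j}}-1}\|\P_{\M^{\perp}}(x^{n_{k_{j}}})\|^{2}.
\end{equation*}
After passing to a further subsubsequence on which $\alpha_{n_{k_{j}}-1}\to 0$ (which the step-size hypothesis makes available) the error term vanishes, and lower semi-continuity of $\N^{**}$ together with continuity of the inner product yield $\N^{**}(x^{\star})+\langle x^{\star},\Lambda^{\star}\rangle\le \N^{**}(y)$ for every $y\in\M$, hence $\le \inf_{x\in\M}\N^{**}(x)$. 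The reverse inequality $\N^{**}(x^{\star})+\langle x^{\star},\Lambda^{\star}\rangle\ge g(\Lambda^{\star})=\inf_{x\in\M}\N^{**}(x)$ is immediate from the definition of $g$, proving \eqref{convex_problem4}.

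For the uniqueness clause, the primal optimum $\bar x\in\M$ obtained via strong duality satisfies $\bar x\in\argmin_{x}\N^{**}(x)+\langle x,\Lambda^{\star}\rangle$ by the KKT conditions; if this argmin is a singleton, then necessarily $x^{\star}=\bar x\in\M$, so $x^{\star}$ solves \eqref{convex_problem}. The most delicate step is the index mismatch in the limit passage, since $x^{n_{k_{j}}}$ is naturally paired with $\Lambda^{n_{k_{j}}-1}$ rather than with $\Lambda^{n_{k_{j}}}$; the hypothesis on $(\alpha_n)$ is precisely what permits the extraction of a common-limit subsubsequence, and is the point at which the step-size condition enters the argument essentially.
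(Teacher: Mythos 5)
Your overall architecture is the same as the paper's (projected subgradient telescoping to get $g(\Lambda^{n_k})\to\sup_{\M^\perp}g$, continuity of $g$ from Proposition \ref{p2}, and the identification $\sup_{\M^\perp}g=\inf_{x\in\M}\N^{**}(x)$), but there is a genuine gap at exactly the step you flag as delicate. You dispose of the index mismatch by ``passing to a further subsubsequence on which $\alpha_{n_{k_j}-1}\to 0$, which the step-size hypothesis makes available.'' It does not. Condition \eqref{ak} does not imply $\alpha_n\to 0$: for instance $\alpha_n=1$ when $n$ is a perfect square and $\alpha_n=n^{-1/4}$ otherwise satisfies $0<\alpha_n\le 1$ and \eqref{ak}, yet $\alpha_n=1$ infinitely often. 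What \eqref{ak} does give is only $\liminf_n\alpha_n=0$ over \emph{all} indices, and you have no control over which indices the running-argmax sequence $(n_k)$ --- let alone the given convergent subsequence $(n_{k_j})$ --- actually visits; nothing prevents $\alpha_{n_{k_j}-1}$ from staying bounded away from $0$ along it, and one cannot ``extract'' smallness from a sequence that is not small. Consequently the remainder $\alpha_{n_{k_j}-1}\|\P_{\M^\perp}(x^{n_{k_j}})\|^2$ in your test inequality need not vanish unless $\P_{\M^\perp}x^{n_{k_j}}\to 0$, i.e. unless $x^\star\in\M$; but the theorem is designed precisely to cover limits $x^\star\notin\M$ (see the example $\N(x)=|x^2-1|$, $\M=\{0\}$ immediately after the theorem, where $x^\star=\pm 1$). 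So as written, your proof of \eqref{convex_problem4} only handles the case $x^\star\in\M$.

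The paper avoids a remainder term altogether: instead of testing the optimality of $x^{n+1}$ against points $y\in\M$, it uses the \emph{exact} identity $\N^{**}(x^{n+1})+\scal{x^{n+1},\Lambda^{n}}=g(\Lambda^{n})$ (the primal iterate attains the dual value of the multiplier that generated it, cf. \eqref{sd1}), and then passes to the limit using continuity of $g$ and $g(\Lambda^\star)=\sup_{\M^\perp} g$; no smallness of the step enters this part. If you insist on pairing $x^{n_{k_j}}$ with $\Lambda^{n_{k_j}}$ rather than with the generating multiplier $\Lambda^{n_{k_j}-1}$, you must control $\Lambda^{n_{k_j}}-\Lambda^{n_{k_j}-1}=\alpha_{n_{k_j}-1}\P_{\M^\perp}(x^{n_{k_j}})$, which is exactly the quantity your argument cannot bound --- so the clean repair is to run the limit passage for the pairs $(x^{n+1},\Lambda^{n})$, as the paper's identity does. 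Two smaller points: your appeal to Fenchel--Rockafellar for existence of a primal optimum $\bar x\in\M$ needs a word of care since a feasible $\N$ (hence $\N^{**}$) may take the value $+\infty$ (supercoercivity plus l.s.c. gives attainment once $\M\cap\mathrm{dom}\,\N^{**}\neq\emptyset$), whereas the paper gets dual attainment for free at $\Lambda^\star$ and argues primal attainment on $\M$ directly; the first telescoping estimate and the uniqueness clause are fine and essentially identical to the paper's.
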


\begin{proof}
As noted in Section \ref{sec4}, the updates are the same as if we apply dual ascent to the convex l.s.c. functional $\J=\N^{**}$, (recall \eqref{da}). Note that $\J^*=\N^{***}=\N^*$ (see e.g. Prop. 13.14 in \cite{bauschke2011convex}), so that the dual function $g$ introduced in \eqref{defg} becomes \begin{equation}\label{defgrepeat}g(\Lambda)=\min_x \J(x)+\scal{x,\Lambda}=-\N^\ast(-\Lambda).\end{equation}Note that the sequence $(g(\Lambda^{n_k}))_{k=1}^\infty$ is non-decreasing by the definition of the numbers $n_k$.
As we observed in Section \ref{sec2}, the updates for $\Lambda^n$ are those one gets by applying the projected subgradient method to $g$ and $\M^\perp$. Since $(\Lambda^{n})_{n=1}^\infty$ is a bounded sequence and $g$ has uniformly bounded subdifferentials in any ball of a fixed radius (Proposition \ref{p2}), it follows by the results in \cite{boyd2006subgradient} (see in particular equation (2)), that $\left(g(\Lambda^{n_k})\right)_{k=1}^\infty$ converges to the maximum value of $g$, as long as $(\alpha_n)_{n=1}^\infty$ satisfies the stated conditions.

Since $(\Lambda^{n_k})_{k=1}^\infty$ and $(x^{n_k})_{k=1}^\infty$ are bounded sequences they have convergent subsequences (by Alaoglu's theorem). Let $(k_j)_{j=1}^\infty$ be any strictly increasing sequence such that both $(\Lambda^{n_{k_j}})_{j=1}^\infty$ and $(x^{n_{k_j}})_{j=1}^\infty$ converge, and denote the corresponding limits by $\Lambda^\star$ and $x^\star$.
By Proposition \ref{p2} $g$ is continuous and therefore $g(\Lambda^\star)$ is the maximum of $g$. We also claim that  \begin{equation}\label{gt}\J(x^\star)+\scal{x^\star,\Lambda^\star}=g(\Lambda^\star),\end{equation} which clearly follows if we show that $\lim_{j\rightarrow\infty}\J(x^{n_{k_j}})=\J(x^\star)$. This in turn is clear by the identity $\J(x^{n})=g(\Lambda^{n})-\scal{x^{n},\Lambda^{n}}$, which holds for all $n$, and the continuity of $g$.

The minimum of \begin{equation}\label{5}\J(x)+\scal{x,\Lambda^\star}\end{equation} is attained for some $\tilde x$ on $\M$,
for otherwise standard arguments show that a different value of $\Lambda$ would yield a higher value of $g$ (see e.g. Sections 5.2.3 and 5.4.2 in \cite{boyd2004convex}). By definition this minimum equals $g(\Lambda^\star)$ and since $\Lambda^{\star}\in \M^\perp$ it follows that $$\J(\tilde x)=\min_{x\in\M}\J(x)=\min_{x\in\H}\J(x)+\scal{x,\Lambda^\star}=g(\Lambda^\star)=\J(x^\star)+\scal{x^\star,\Lambda^\star},$$
where the last identity follows by \eqref{gt}. With this we have established \eqref{convex_problem4}.

The above chain of equalities also implies $$\J(\tilde x)+\scal{\tilde x,\Lambda^\star}=\J(\tilde{x})=\J(x^\star)+\scal{x^\star,\Lambda^\star}$$ so if \eqref{5} only has one minimizer we immediately deduce that $x^\star=\tilde{x}$, and the remainder of the theorem follows.
\end{proof}

\begin{figure}[htb]
\begin{center}
\includegraphics[width=40mm,valign=c]{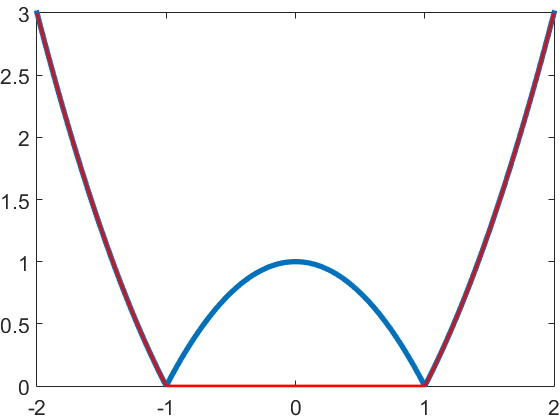}
\includegraphics[width=40mm,valign=c]{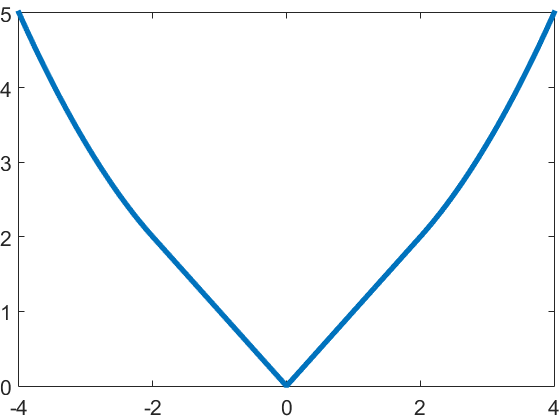}
\begin{minipage}[c]{30mm}
\begin{tabular}{c|c|c}
$n$ & $x^n$ & $\Lambda^n$ \\
\hline
0 & - & 0 \\
1 & 1 & 1 \\
2 & -1 & 1/2 \\
3 & 1 & 1/6 \\
4 & -1 & -1/12 \\
5 & 1 & 7/60
\end{tabular}
\end{minipage}
\end{center}
\caption{\emph{Left} - The function $\N(x) = |x^2-1|$ (blue) and its convex envelope (red).
\emph{Middle} - The conjugate function $\N^*(\Lambda)$.
\emph{Right} - Sequence generated by dual ascent.}
\label{fig:nonuniqueexample}
\end{figure}
When \eqref{convex_problem} does not have a unique solution there may not be any subsequence that converges to an $x\in \M$. As an example, consider the case of $\H=\mathbb{R}$, $\M=\{0\}$, $\N(x)=|x^2-1|$ and $\alpha_n=\frac{1}{n+1}$. Then $\M^{\perp}=\mathbb{R}$ so $\Lambda^{n+1}=\Lambda^n+\frac{1}{n+1}x^{n+1}$.
It is easily computed that $\N^*(\Lambda)=|\Lambda|$ for $|\Lambda|\leq 2$, and $\N^*(\Lambda)=1+\Lambda^2/4$ elsewhere. Clearly $\N^{**}(x)=\max(0,x^2-1)$.
The functions $\N$,$\N^*$ and $\N^{**}$ are displayed in Figure~\ref{fig:nonuniqueexample}
together with a sequence generated by dual ascent for this problem.
It is clear that $(x^n)_{n=1}^\infty$ will oscillate between values in $\{\pm 1\}$ whereas $\lim_{n\rightarrow\infty}\Lambda^n=0$. Thus, depending on which subsequence is chosen, $x^\star$ will be either 1 or -1, and in neither case does it lie on $\M$. To avoid the possibility of such undesirable sequences, we now consider an augmented version.
\section{Convergence of augmented dual ascent}\label{secconvII}

In this section we will make a minor change of the algorithm and give a different proof, inspired by \cite{goldstein2014fast}, leading us to assume instead that $\alpha_n$ is fixed. We first consider the case when $\K$ is a \textit{convex} feasible functional on some (finite dimensional) Hilbert space $\H$ and $\M$ a subspace, and discuss the important particular case $\K=\N^{**}$ in Section \ref{sec:JN}. We introduce the augmented dual ascent scheme
\begin{align}\label{convex_augmented_da1}
x^{n+1} &= \argmin_{x} \K(x)+\scal{x,\Lambda^n} +\frac{\alpha}{2}\|x\|^2 \\
\label{convex_augmented_da2}\Lambda^{n+1} &= \Lambda^{n} + \alpha\P_{\M^\perp}(x^{n+1}),
\end{align}
with $\Lambda^0=0$. To our best knowledge the following result, which assumes no differentiability of $\K$, is new.

\begin{theorem}\label{t3}
Let $\alpha>0$ be fixed and let $\K$ be a convex feasible functional. Suppose that $(\Lambda^{n})_{n=1}^\infty$, given by \eqref{convex_augmented_da2}, is a bounded sequence. It then holds that $(x^{n})_{n=1}^\infty$ and $(\Lambda^{n})_{n=1}^\infty$ converges to some limits $x^\star$ and $\Lambda^\star$, where $x^\star$ is the solution to \begin{equation}\label{convex_problem_aug}\argmin_{x\in\M}\K(x)+\frac{\alpha}{2}\|x\|^2.\end{equation}
\end{theorem}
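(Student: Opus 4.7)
The plan is to mimic the Fejér-monotonicity / monotone-operator proof style used by Goldstein et al., centered on a single telescoping inequality of the form
$$\|\Lambda^{n+1} - \Lambda^\star\|^2 + \alpha^2 \|x^{n+1} - x^\star\|^2 \leq \|\Lambda^n - \Lambda^\star\|^2.$$
Before that I need to set up a KKT pair $(x^\star,\Lambda^\star)$. Since $\K$ is convex, l.s.c., proper and feasible, the perturbed objective $\K(\cdot) + \tfrac{\alpha}{2}\|\cdot\|^2$ is strictly convex, l.s.c.\ and supercoercive, so restricted to the closed subspace $\M$ it attains a unique minimum $x^\star$. Because the normal cone to $\M$ at any point is $\M^\perp$, the sum rule (whose constraint qualification is free here as $\M$ is a subspace and $\K$ is feasible) yields some $\Lambda^\star \in \M^\perp$ with $-(\Lambda^\star + \alpha x^\star) \in \partial \K(x^\star)$. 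The strict convexity also ensures that each $x^{n+1}$ in \eqref{convex_augmented_da1} is uniquely defined, with optimality condition $-(\Lambda^n + \alpha x^{n+1}) \in \partial \K(x^{n+1})$.

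The core computation is then a one-line monotone operator argument. Monotonicity of $\partial \K$ applied to these two inclusions gives
$$\langle x^{n+1} - x^\star,\, \Lambda^\star - \Lambda^n\rangle \geq \alpha \|x^{n+1} - x^\star\|^2.$$
Using $x^\star \in \M$ and $\Lambda^n,\Lambda^\star \in \M^\perp$, the $\Lambda$-update rewrites as $\Lambda^{n+1} - \Lambda^\star = (\Lambda^n - \Lambda^\star) + \alpha \P_{\M^\perp}(x^{n+1}-x^\star)$, so that expanding $\|\Lambda^{n+1} - \Lambda^\star\|^2$, replacing $\langle \Lambda^n - \Lambda^\star, \P_{\M^\perp}(x^{n+1}-x^\star)\rangle$ by $\langle \Lambda^n - \Lambda^\star, x^{n+1}-x^\star\rangle$ (since both $\Lambda$'s lie in $\M^\perp$), and substituting the monotonicity inequality along with $\|\P_{\M^\perp}(x^{n+1}-x^\star)\|^2 \leq \|x^{n+1}-x^\star\|^2$ collapses to the desired telescoping bound.

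From the telescoping bound $(\|\Lambda^n - \Lambda^\star\|)_n$ is non-increasing (in particular Fejér-monotone), and summing gives $\sum_n \|x^{n+1}-x^\star\|^2 < \infty$, so $x^n \to x^\star$. For $\Lambda^n$, the assumed boundedness yields a convergent subsequence $\Lambda^{n_k} \to \tilde\Lambda$; passing to the limit in $-(\Lambda^{n_k} + \alpha x^{n_k+1}) \in \partial \K(x^{n_k+1})$ and invoking the closedness of the graph of $\partial \K$ (standard for convex l.s.c.\ $\K$) gives $-(\tilde\Lambda + \alpha x^\star) \in \partial \K(x^\star)$, i.e.\ $\tilde\Lambda$ is itself a valid KKT multiplier. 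Running the telescoping argument again with $\Lambda^\star$ replaced by $\tilde\Lambda$ shows $\|\Lambda^n - \tilde\Lambda\|$ is non-increasing, and since a subsequence already converges to $0$ the full sequence does as well, proving $\Lambda^n \to \tilde\Lambda$.

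The main obstacle I anticipate is the justification of the KKT pair: the proof hinges on the existence of a dual optimizer $\Lambda^\star \in \M^\perp$ satisfying the inclusion above, which in the non-differentiable setting has to be either argued from standard convex duality (using that $\M$ is a subspace so that constraint qualifications come for free) or, more robustly, extracted a posteriori as any cluster point $\tilde\Lambda$ of the sequence $(\Lambda^n)$ via the closed-graph argument in the final step. Once existence is in hand the rest is essentially a one-step dissipation estimate, so the conceptual work is all concentrated in setting up the correct monotone operator inequality.
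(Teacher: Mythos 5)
Your core telescoping computation is correct: from the two inclusions $-(\Lambda^n+\alpha x^{n+1})\in\partial\K(x^{n+1})$ and $-(\Lambda^\star+\alpha x^\star)\in\partial\K(x^\star)$, monotonicity of $\partial\K$ plus $\Lambda^n,\Lambda^\star\in\M^\perp$, $x^\star\in\M$ indeed gives $\|\Lambda^{n+1}-\Lambda^\star\|^2+\alpha^2\|x^{n+1}-x^\star\|^2\leq\|\Lambda^n-\Lambda^\star\|^2$, and the Fej\'er/Opial endgame for $\Lambda^n$ is fine. This is a genuinely different route from the paper, which never postulates a saddle point: it works with the dual function $h(\Lambda)=-\J^*(-\P_{\M^\perp}\Lambda)$ for $\J=\K+\tfrac{\alpha}{2}\|\P_\M\cdot\|^2$, proves the ascent inequality $h(\Lambda^{n+1})\geq h(\Lambda^n)+\alpha^{-1}\|\Lambda^{n+1}-\Lambda^n\|^2$, shows $h(\Lambda^n)\uparrow\sup h$, and only then extracts a maximizer of $h$ as a cluster point of the (assumed bounded) dual sequence, obtaining Fej\'er monotonicity toward it; convergence of $x^n$ then follows from continuity of the proximal map.

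The genuine gap is your justification of the KKT pair. The claim that the constraint qualification ``comes for free as $\M$ is a subspace and $\K$ is feasible'' is false when $\K$ is allowed to take the value $+\infty$ (which the theorem permits: feasibility only means proper, l.s.c., bounded below and supercoercive). One needs something like $\mathrm{ri}(\mathrm{dom}\,\K)\cap\M\neq\emptyset$, and without it a dual optimizer need not exist at all: the paper's own remark after Corollary \ref{c1} (the example $\K(x)=x\log x - x$ on $[0,\infty)$, $+\infty$ elsewhere, with $\M=\{0\}$, where $\partial\K(0)=\emptyset$ and $h$ does not attain its supremum) shows that a convex feasible $\K$ may admit no multiplier $\Lambda^\star$ with $-(\Lambda^\star+\alpha x^\star)\in\partial\K(x^\star)$. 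Your proposed rescue --- extracting $\Lambda^\star$ a posteriori as a cluster point $\tilde\Lambda$ of $(\Lambda^n)$ via the closed-graph argument --- is circular: that step uses $x^{n_k+1}\to x^\star\in\M$, which you only obtained from the telescoping inequality, which in turn presupposes the very KKT pair you are trying to construct. Without an a priori pair, continuity of the prox map only gives $x^{n_k+1}\to\tilde x$ with $-(\tilde\Lambda+\alpha\tilde x)\in\partial\K(\tilde x)$, and nothing forces $\tilde x\in\M$; establishing primal feasibility of the limit, e.g. $\P_{\M^\perp}x^{n+1}=\alpha^{-1}(\Lambda^{n+1}-\Lambda^n)\to 0$, is exactly what the paper's ascent inequality on the bounded-above dual function delivers and what your argument lacks. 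If $\K$ is finite-valued your proof is complete as written (then $\mathrm{dom}\,\K=\H$, the sum rule applies, and in fact boundedness of $\Lambda^n$ is automatic per Corollary \ref{c1}), but for the theorem in its stated generality you must either derive the existence of the dual optimizer from the boundedness of $(\Lambda^n)$ via the dual-objective machinery, or restrict the statement.
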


Remark; the condition on $(\Lambda^{n})_{n=1}^\infty$ is fulfilled whenever $\K$ is finite-valued, see the corollaries.

\begin{proof}
Set $\J(x)=\K(x)+\frac{\alpha}{2}\|\P_\M x\|^2$, 
let $g$ be as in Section \ref{sec2} and define $h:\H\rightarrow\mathbb{R}$ by \begin{equation}\label{brase}h(\Lambda)=g(\P_{\M^\perp}\Lambda)=-\J^*(-\P_{\M^\perp}\Lambda)=\min_x \J(x)+\scal{x,\P_{\M^\perp}\Lambda}.\end{equation} Since $\K$ is feasible it directly follows that $\J$ is feasible. Proposition \ref{p2} thus implies that $\J^*$ is bounded below which implies that $h$ is bounded above. Also note that, since the sequence $(\Lambda^{n})_{n=1}^\infty$ lies in $\M^{\perp}$ by definition, see \eqref{convex_augmented_da2}, there is no difference between $h$ and $g$ for these points. Clearly \begin{equation}\label{ineq167} \partial h(\Lambda)=\P_{\M^\perp}\partial g(\P_{\M^\perp}\Lambda)\end{equation} and from \eqref{sd1} we know that $x\in\partial g(\Lambda)$ if and only if $-\Lambda\in\partial\J(x)$. Note that $x^{n+1}$ is given by $$x^{n+1}=\argmin_x \J(x)+\scal{x,\Lambda^n}+\frac{\alpha}{2}\fro{\P_{\M^{\perp}}x},$$ i.e. $$\nabla\J(x^{n+1})=-(\Lambda^n+\alpha\P_{\M^{\perp}}x^{n+1})=-\Lambda^{n+1},$$ and hence we conclude that
\begin{equation}
\label{6}x^{n+1}=\nabla g(\Lambda^{n+1}).
\end{equation}
Combining \eqref{convex_augmented_da2} and \eqref{ineq167}, we see that \eqref{6} implies the following;
\begin{equation}\label{ineq5}
h(\Gamma)\leq h(\Lambda^{n+1})+\langle\Gamma-\Lambda^{n+1},\nabla^+h(\Lambda^{n+1})\rangle_2= h(\Lambda^{n+1})+\alpha^{-1}\langle\Gamma-\Lambda^{n+1}, \Lambda^{n+1}-\Lambda^n\rangle_2
\end{equation}
where $\Gamma$ is arbitrary. Setting $\Gamma=\Lambda^n$ immediately gives
\begin{equation}\label{ineq3}
h(\Lambda^{n+1})\geq h(\Lambda^{n})+\alpha^{-1}\|\Lambda^{n+1}-\Lambda^{n}\|^2
\end{equation}
so $h$ increases for each iteration. Thus $\lim_{n\rightarrow\infty}h(\Lambda^n)=c$ exists, and since $h$ is bounded above it is finite.
We now show that this is actually the supremum of $h$. First note that \begin{equation}\label{7}\langle\Gamma-\Lambda^{n+1},\Lambda^{n+1}-\Lambda^{n}\rangle_2=\|\Gamma-\Lambda^n\|^2-\|\Gamma-\Lambda^{n+1}\|^2-\|\Lambda^n-\Lambda^{n+1}\|^2.\end{equation} If $\Gamma$ is a point such that $h(\Gamma)\geq c$, then the above is positive by \eqref{ineq5}, which means that $\|\Gamma-\Lambda^n\|$ is decreasing with increasing $n$. Moreover, combining \eqref{ineq5} with \eqref{7} we get that the (positive) sum
$$\alpha\sum_{n=1}^\infty \big(h(\Gamma)-h(\Lambda^n)\big)=\|\Gamma-\Lambda^1\|^2-\lim_{n\rightarrow\infty}\|\Gamma-\Lambda^n\|^2-\sum_{n=1}^\infty\|\Lambda^n-\Lambda^{n+1}\|^2$$
is bounded by $\|\Gamma-\Lambda^{1}\|^2$, which shows that $\lim_{n\rightarrow\infty} h(\Lambda^n)=h(\Gamma)$. By recalling that this also equals $c$, it follows that $h(\Gamma)>c$ can not hold for any point. We conclude that $c$ is indeed the supremum of $h$.

Since $(\Lambda^{n})_{n=1}^\infty$ is a bounded sequence (and $\H$ is finite dimensional) it has a convergent subsequence $(\Lambda^{n_k})_{k=1}^\infty$, whose limit we denote by $\Lambda^\star$. By \eqref{brase} and the fact that $\J^*$ is l.s.c, it follows that $h$ is u.s.c. and so $h(\Lambda^\star)\geq\lim_{k\rightarrow\infty}h(\Lambda^{n_k})=c$, so $h(\Lambda^\star)=c$. Let $\Gamma=\Lambda^\star$ in \eqref{ineq5}. Fix $n$ and let $n_k$ be a number in the subsequence such that $n_k>n$. Similarly to above we then have
\begin{equation}\label{8}\alpha\sum_{j=n}^{n_k} \big(h(\Lambda^\star)-h(\Lambda^j)\big)=\|\Lambda^\star-\Lambda^{n}\|^2-\|\Lambda^\star-\Lambda^{n_k}\|^2-\sum_{j=n}^{n_k}\|\Lambda^j-\Lambda^{j+1}\|^2\end{equation}
which in the limit gives
$$\sum_{j=n}^{\infty}\|\Lambda^j-\Lambda^{j+1}\|^2+\alpha\sum_{j=n}^{\infty} \big(h(\Lambda^\star)-h(\Lambda^j)\big)=\|\Lambda^\star-\Lambda^{n}\|^2.$$
Both sums are convergent since their summands are positive. It follows that we can make $\|\Lambda^\star-\Lambda^{n}\|^2$ arbitrarily small upon choosing $n$ sufficiently large, (and also that this quantity is decreasing). It follows that $\lim_{n\rightarrow \infty}\Lambda^n=\Lambda^\star$, as desired.

It remains to prove that $(x^{n})_{n=1}^\infty$ converges to a minimum of \eqref{convex_problem_aug}. The operator which is implicitly defined in \eqref{convex_augmented_da1} (taking $\Lambda^n$ to $x^{n+1}$) equals the proximal mapping of $\frac{1}{\alpha}\K$ evaluated at $\frac{1}{\alpha}\Lambda^n$  Since $\K$ is convex and l.s.c., these mappings are firmly non-expansive, and in particular continuous (see e.g. Ch. 4.1 and 12.4 in \cite{bauschke2011convex}, in particular Proposition 12.27). The convergence of $(x^{n})_{n=1}^\infty$ thus follows from that of $(\Lambda^{n})_{n=1}^\infty$. Finally, if $x^\star$ would not be a solution to \eqref{convex_problem_aug}, then we can argue as in the end of Theorem \ref{t1} to get a contradiction (see \eqref{5} and use $\J(x)=\K(x)+\frac{\alpha}{2}\|x\|^2$, which has a unique minimizer since it is strictly convex).
\end{proof}
We collect a few results that came out in the above proof.
\begin{corollary}\label{c1}
Let $h$, defined as in the above proof, attain its supremum. Then $(\Lambda^{n})_{n=1}^\infty$, given by \eqref{convex_augmented_da2}, is a bounded sequence. In particular, this happens if $\K$ is a feasible finite-valued functional.
\end{corollary}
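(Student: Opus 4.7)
The corollary has two claims: (i) if $h$ attains its supremum then $(\Lambda^n)$ is bounded, and (ii) this supremum-attainment holds whenever $\K$ is finite-valued and feasible. My plan is to handle them in that order, using essentially only tools already developed in the proof of Theorem \ref{t3} together with Proposition \ref{p2}.

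For (i), the idea is to reuse the Fejér-type estimate implicit in the proof of Theorem \ref{t3}. Pick any maximizer $\Gamma^\star$ of $h$ and substitute $\Gamma = \Gamma^\star$ into \eqref{ineq5}; the fact that $h(\Gamma^\star) \geq h(\Lambda^{n+1})$ forces the left-hand side of the identity \eqref{7} to be nonnegative, which then yields
$$\|\Gamma^\star - \Lambda^{n+1}\|^2 \leq \|\Gamma^\star - \Lambda^n\|^2 - \|\Lambda^{n+1} - \Lambda^n\|^2 \leq \|\Gamma^\star - \Lambda^n\|^2.$$
Thus $\|\Gamma^\star - \Lambda^n\|$ is monotone nonincreasing in $n$, and since $\Lambda^0 = 0$ this gives the uniform bound $\|\Lambda^n\| \leq 2\|\Gamma^\star\|$.

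For (ii), my strategy is to show that $\Lambda \mapsto \J^*(-\P_{\M^\perp}\Lambda)$ attains its infimum on $\M^\perp$, which is exactly the claim. Set $\J = \K + \frac{\alpha}{2}\|\P_\M \cdot\|^2$ as in the proof of Theorem \ref{t3}. Because $\K$ is finite-valued and feasible and the added quadratic is nonnegative, $\J$ inherits both properties. Proposition \ref{p2} then yields that $\J^*$ is feasible (hence bounded below and with bounded sublevel sets) and, being a Fenchel conjugate, l.s.c.; consequently $h = -\J^*(-\P_{\M^\perp}\cdot)$ is upper semi-continuous and bounded above on $\M^\perp$. Setting $c = \sup_{\Lambda \in \M^\perp} h(\Lambda) < \infty$ and taking a maximizing sequence $(\Gamma^k)_{k=1}^\infty \subset \M^\perp$, the tail lies in the bounded sublevel set $\{\Lambda : \J^*(-\Lambda) \leq -c + 1\}$, so we extract a convergent subsequence whose limit $\Gamma^\star$ still lies in the closed subspace $\M^\perp$. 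Upper semi-continuity of $h$ then gives $h(\Gamma^\star) \geq \limsup h(\Gamma^k) = c$, so the supremum is attained and (i) finishes the job. There is no real obstacle here; the substantive content is that Proposition \ref{p2} delivers exactly the upper semi-continuity and the bounded sublevel sets required for this standard compactness reasoning.
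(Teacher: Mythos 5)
Your proof is correct and follows essentially the same route as the paper: part (i) is exactly the Fej\'{e}r-monotonicity argument the paper invokes via \eqref{ineq5} and \eqref{7}, and part (ii) is the paper's appeal to Proposition \ref{p2} (feasibility of $\J^*$ for finite-valued $\J$) with the coercivity-plus-semicontinuity attainment step, which the paper leaves implicit, spelled out explicitly.
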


Remark; the functional on $\mathbb{R}$ given by $f(x)=\infty$ for $x<0$, $f(0)=0$ and $f(x)=x\log x-x$ has $f^*(y)=e^y$ and is convex (since it equals $f^{**}$). By modification of this simple example, one can show that there are situations where $h$ does not attain its supremum.

\begin{proof}
The first part of the corollary follows immediately by the sentence following \eqref{7}. If we now assume that $\K$ is finite valued then so is $\J$, and Proposition \ref{p2} implies that $\J^*$ is feasible. By \eqref{brase} we conclude that $h$ attains its supremum.

\end{proof}

We now consider the speed of convergence of the dual variable.

\begin{corollary}\label{c2}
Suppose that $(\Lambda^{n})_{n=1}^\infty$ is bounded. Then $(\Lambda^{n})_{n=1}^\infty$ is Fej\'{e}r monotone with respect to the set of maximizers of $h$, which is non-empty. Moreover, if $c$ is the supremum of $h$ then $c-h(\Lambda^n)=o(1/n)$.
\end{corollary}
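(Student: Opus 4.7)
The plan is to exploit the two key inequalities already extracted in the proof of Theorem~\ref{t3}, namely \eqref{ineq5} and the three-point identity \eqref{7}, applied with $\Gamma$ ranging over maximizers of $h$. Before that, we need to confirm the set of maximizers is non-empty: but this was already shown in the proof of Theorem~\ref{t3}, since under the boundedness hypothesis we exhibited a limit point $\Lambda^\star$ with $h(\Lambda^\star)=c$. I would begin by citing this fact rather than reproving it.

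Next I would fix any maximizer $\Gamma$ (so $h(\Gamma)=c\geq h(\Lambda^{n+1})$) and combine \eqref{ineq5} with \eqref{7} to obtain the master inequality
\begin{equation}\label{master}
\alpha\bigl(c-h(\Lambda^{n+1})\bigr)+\|\Lambda^n-\Lambda^{n+1}\|^2\leq \|\Gamma-\Lambda^n\|^2-\|\Gamma-\Lambda^{n+1}\|^2.
\end{equation}
Since both terms on the left are non-negative, Fej\'er monotonicity $\|\Gamma-\Lambda^{n+1}\|\leq\|\Gamma-\Lambda^n\|$ is immediate, which establishes the first claim. I would then telescope \eqref{master} from $n=0$ to $\infty$, which (since $\Lambda^0=0$) yields the summability estimate
\begin{equation}\label{summable}
\alpha\sum_{n=1}^\infty \bigl(c-h(\Lambda^n)\bigr)+\sum_{n=0}^\infty\|\Lambda^n-\Lambda^{n+1}\|^2\leq \|\Gamma\|^2<\infty.
\end{equation}

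For the rate, set $a_n=c-h(\Lambda^n)$. Inequality \eqref{ineq3} from the proof of Theorem~\ref{t3} tells us that $h(\Lambda^n)$ is non-decreasing, so $(a_n)$ is non-negative and non-increasing, and \eqref{summable} shows it is summable. I would conclude using the classical observation that a non-negative, non-increasing, summable sequence satisfies $na_n\to 0$: for any $\varepsilon>0$, choose $N$ with $\sum_{j\geq N}a_j<\varepsilon$; then for $n\geq 2N$ monotonicity gives $(n-\lceil n/2\rceil)a_n\leq \sum_{j=\lceil n/2\rceil+1}^n a_j<\varepsilon$, and sending $n\to\infty$ yields $na_n\to 0$, i.e. $c-h(\Lambda^n)=o(1/n)$. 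The only conceptual obstacle is simply noticing that the one-line combination \eqref{master} of \eqref{ineq5} and \eqref{7} already contains all three conclusions once a maximizer is available; everything else is bookkeeping.
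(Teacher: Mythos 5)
Your proposal is correct and follows essentially the same route as the paper: both rest on the existence of a maximizer (from the proof of Theorem~\ref{t3}), the combination of \eqref{ineq5} with \eqref{7} to obtain a Fej\'er-type descent inequality, its telescoped form, and the monotonicity of $h(\Lambda^n)$ from \eqref{ineq3}. The only (cosmetic) difference is the last step: you extract $c-h(\Lambda^n)=o(1/n)$ from the classical fact that a non-negative, non-increasing, summable sequence $a_n$ satisfies $na_n\to 0$, whereas the paper bounds $\limsup_m m\bigl(c-h(\Lambda^m)\bigr)$ by $\|\Gamma-\Lambda^n\|^2/\alpha$ and lets $n\to\infty$ using $\|\Gamma-\Lambda^n\|\to 0$.
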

\begin{proof}
The existence of maximizers was established in the proof of Theorem \ref{t3}. Let $\Gamma$ be one such. It follows from \eqref{7} and the subsequent argument that $(\|\Gamma-\Lambda^{n}\|)_{n=1}^\infty$ is decreasing, i.e. that $(\Lambda^{n})_{n=1}^\infty$ is Fej\'{e}r monotone with respect to the set of maximizers of $h$. Now recall that $h(\Lambda^n)$ increases by \eqref{ineq3}. In analogy with  \eqref{8} we can get the concrete estimate
$$(m-n)\alpha\big(h(\Gamma)-h(\Lambda^m)\big)\leq\alpha\sum_{j=n}^{m} \big(h(\Gamma)-h(\Lambda^j)\big)\leq \|\Gamma-\Lambda^n\|^2,$$ showing that $$\limsup_{m\rightarrow\infty}m(h(\Gamma)-h(\Lambda^m))\leq \limsup_{m\rightarrow\infty} \frac{m}{m-n}\frac{\|\Gamma-\Lambda^n\|^2}{\alpha}=\frac{\|\Gamma-\Lambda^n\|^2}{\alpha}.$$ Since $n$ is arbitrary and $\|\Gamma-\Lambda^n\|\rightarrow 0$ as $n\rightarrow\infty$, the limsup is actually 0, and hence the speed of convergence of $h(\Lambda^n)$ is at least $o(1/n)$.
\end{proof}

\subsection{Other steplengths}\label{mixture}

The augmented dual ascent scheme has a drawback; the term $\frac{\alpha}{2}\|x\|^2$ is included for convergence purposes and will influence the optimal point, and hence it is desirable to keep $\alpha$ small. However, $\alpha$ is also the step length in the updates of $\Lambda^n$, and in order for the algorithm to converge rapidly one would like to have $\alpha$ fairly large. To remedy this one may consider the following scheme
\begin{align}\label{convex_augmented_daNEW1}
x^{n+1} &= \argmin_{x} \K(x)+\scal{x,\Lambda^n} +\frac{\alpha}{2}\|x\|^2 \\
\label{convex_augmented_daNEW2}\Lambda^{n+1} &= \Lambda^{n} + \alpha_n \P_{\M^\perp}(x^{n+1}),
\end{align}
where $\lim_{n\rightarrow\infty}{\alpha_n}=\alpha$, which is closer to the original scheme \eqref{danc1}-\eqref{danc2}. Upon assuming further regularity of $\K$, it is possible to modify the proof of Theorem \ref{t3} and derive conditions under which this scheme converges to $$\argmin_{x\in\M} \K(x) +\frac{\alpha}{2}\|x\|^2,$$ but we refrain from this and content with noting that the results of the previous section applies whenever $\alpha_n=\alpha$ for all sufficiently large $n$ (which is immediate by Theorem \ref{t3}). We will however use the scheme \eqref{convex_augmented_daNEW1}-\eqref{convex_augmented_daNEW2} in our numerical section.

\subsection{The case $\J=\N^{**}$}\label{sec:JN}

We now move back to considering a non-convex feasible functional $\N$, as in Section \ref{secconv}. In this section we consider the augmented dual ascent scheme
\begin{align}\label{convex_augmented_daN1}
x^{n+1} &= \argmin_{x} \N^{**}(x)+\scal{x,\Lambda^n} +\frac{\alpha}{2}\|x\|^2 \\
\label{convex_augmented_daN2}\Lambda^{n+1} &= \Lambda^{n} + \alpha\P_{\M^\perp}(x^{n+1}),
\end{align}
with $\Lambda^0=0$, which is an alteration of \eqref{danc1}-\eqref{danc2}. As we shall see, the addition of the quadratic penalty may alter the limit point slightly. However, for small values of $\alpha$ it will give a good approximation. (In the context of low rank approximation we investigate this further in Section \ref{sec:numeval}.) The penalty term allows us to prove convergence of the generated sequence itself without the need for examining sub-sequences. However, note that it does require explicit knowledge of $\N^{**}$.

\begin{theorem}\label{t33}
Let $\alpha>0$ be fixed and let $\N$ be a feasible functional on some finite dimensional Hilbert space $\H$. Suppose that that $\N$ is finite-valued, or that $(\Lambda^{n})_{n=1}^\infty$, given by \eqref{convex_augmented_da2}, is a bounded sequence. It then holds that $(x^{n})_{n=1}^\infty$ and $(\Lambda^{n})_{n=1}^\infty$ converges to some limits $x^\star$ and $\Lambda^\star$, where $x^\star$ is the solution to \begin{equation}\label{convex_problem_aug}\argmin_{x\in\M}\N^{**}(x)+\frac{\alpha}{2}\|x\|^2.\end{equation}
\end{theorem}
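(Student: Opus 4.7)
The plan is essentially a reduction to Theorem \ref{t3} applied with $\K = \N^{**}$. The updates \eqref{convex_augmented_daN1}--\eqref{convex_augmented_daN2} coincide verbatim with \eqref{convex_augmented_da1}--\eqref{convex_augmented_da2} under this identification, so if the hypotheses of Theorem \ref{t3} can be checked, the conclusion about convergence of $(x^n)$ and $(\Lambda^n)$ and the identification of $x^\star$ as the minimizer of \eqref{convex_problem_aug} transfers immediately.

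First I would verify that $\K = \N^{**}$ satisfies the standing hypotheses of Theorem \ref{t3}. Convexity of $\N^{**}$ is automatic since any Fenchel biconjugate is convex and l.s.c.. Feasibility of $\N^{**}$ is exactly the content of Proposition \ref{p4}, which states that if $\N$ is feasible then so is $\N^{**}$. Hence $\K$ qualifies as a convex feasible functional and Theorem \ref{t3} is applicable once the dual iterates are known to be bounded.

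Next I would dispose of the boundedness of $(\Lambda^n)_{n=1}^\infty$ in the two cases of the hypothesis. If $(\Lambda^n)_{n=1}^\infty$ is bounded by assumption, nothing needs to be done. If instead $\N$ is assumed finite-valued, I would argue that $\N^{**}$ is also finite-valued: since $\N$ is bounded below (by feasibility), the corresponding constant function is a convex l.s.c. minorant of $\N$, so $\N^{**}$ is bounded below by the same constant; on the other hand $\N^{**}\leq \N<\infty$ everywhere since $\N^{**}$ is the l.s.c.\ convex envelope. Thus $\N^{**}$ is a feasible finite-valued functional, and Corollary \ref{c1} then yields that $(\Lambda^n)_{n=1}^\infty$ is automatically bounded.

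With both hypotheses in hand, Theorem \ref{t3} directly produces limits $x^\star$ and $\Lambda^\star$ with $x^\star$ solving \eqref{convex_problem_aug}, as required. I do not anticipate a serious obstacle; the only point worth checking carefully is the finite-valuedness step above, since it is the link that lets the ``finite-valued $\N$'' hypothesis feed into Corollary \ref{c1} via $\N^{**}$ rather than $\N$ itself.
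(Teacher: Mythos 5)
Your proposal is correct and follows essentially the same route as the paper: the paper's proof is precisely the reduction to Theorem \ref{t3} and Corollary \ref{c1} with $\K=\N^{**}$, invoking Proposition \ref{p4} for feasibility and the inequality $\N^{**}\leq \N$ for finite-valuedness. Your extra remark spelling out why $\N^{**}$ is finite-valued (bounded below by the lower bound of $\N$, and $\N^{**}\leq\N<\infty$) is a correct elaboration of the detail the paper leaves implicit.
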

\begin{proof}
The result is immediate by applying Theorem \ref{t3} and Corollary \ref{c1} to $\K=\N^{**}$, since this is feasible and/or finite-valued whenever $\N$ is, by Proposition \ref{p4} and the basic inequality $\N^{**}\leq \N$.
\end{proof}

\section{Low rank approximation with constraints}\label{sec:newsec3}
We now depart from the general theory and consider the problem of low rank approximation with subspace constraints
where the objective function $\N$ is of the form
\begin{equation}
\N_F(X) = \sigma_0\rank(X)+\|X-F\|^2_2,
\label{eq:rankobj}
\end{equation}
where $X$ is an $M\times N$ matrix.

Before we address the optimization schemes we present some useful results.
For the above formulation \eqref{eq:rankobj} the conjugate function $\N_F^*$ and the convex envelope $\N_F^{**}$ can be computed in closed form  \cite{larsson-olsson-ijcv-2016}.
The conjugate function is given by
\begin{equation}
\label{fenchel1}\N_F^\ast (\Lambda) = \sum_j \max_{}\para{\sigma_j^2\left(\frac{\Lambda}{2}+F\right)-\sigma_0^2,0} -\fro{F}
\end{equation}
and the convex envelope is given by
\begin{equation} \label{probmod}
\N_F^{\ast \ast}(X)= \sum_j\left( \sigma_0^2-\para{\max_{}\para{\sigma_0-\sigma_j(X),0}}^2\right) + \|X-F\|_2^2.
\end{equation}
In the primal update of augmented dual ascent \eqref{convex_augmented_daN1} we need to solve problems of the form
\begin{equation}
\argmin_X \N_F^{**}(X)+\frac{\alpha}{2} \|X\|_2^2.
\label{eq:augmentedprimal}
\end{equation}
It turns out that this can be efficiently computed by modifying the singular values of $F$. More precisely, suppose that $F$ has the singular value decomposition $F=U\Sigma_{\phi} V^*$,
where $\Sigma_\phi$ is a diagonal matrix containing the singular values $\phi=(\phi_j)_{j=1}^{\min(M,N)}$. Given a function $f:[0,\infty)\rightarrow\mathbb{C}$, we  introduce the operator
\begin{equation}\label{singval}
\mathfrak{S}_f(F)=U\Sigma_{f(\phi)} V^*,\end{equation}
that modifies $F$ by changing the singular values from $\phi_j$ to $f(\phi_j)$. This operation is known as the ``singular value functional calculus'' \cite{andersson2015operator} as well as ``generalized matrix function'' \cite{arrigo2015computation}.
The following proposition now shows how to solve \eqref{eq:augmentedprimal}.

\begin{proposition}\label{p3}
Let $\alpha\geq 0$ and set \begin{equation*}f_{\alpha}(x)=\left\{\begin{array}{ll}
                   0 & x<{\sigma_0}\\
                   \frac{2}{\alpha}(x-\sigma_0) & {\sigma_0}\leq x < {(1+\frac{\alpha}{2})}\sigma_0\\
                   {(1+\frac{\alpha}{2})}^{-1}x, & {(1+\frac{\alpha}{2})}\sigma_0\leq x.
                 \end{array}\right.
\end{equation*}
Then $\mathfrak{S}_{f_{\alpha}}(F)$ solves \eqref{eq:augmentedprimal}.
\end{proposition}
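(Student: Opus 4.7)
The plan is to exploit unitary invariance to reduce the matrix problem to a collection of scalar problems. The objective
\[
g(X) \;=\; \N_F^{**}(X)+\tfrac{\alpha}{2}\|X\|_2^2
\]
can be rewritten, using $\|X-F\|_2^2=\|X\|_2^2-2\,\re\langle X,F\rangle+\|F\|_2^2$ and $\|X\|_2^2=\sum_j\sigma_j(X)^2$, as
\[
g(X) \;=\; \sum_j \Bigl( \sigma_0^2 - (\max(\sigma_0-\sigma_j(X),0))^2 + \bigl(1+\tfrac{\alpha}{2}\bigr)\sigma_j(X)^2 \Bigr) \;-\; 2\,\re\langle X,F\rangle \;+\; \|F\|_2^2.
\]
Every term except the cross term $-2\re\langle X,F\rangle$ depends only on the singular values $\sigma_j(X)$. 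By the von Neumann trace inequality, $\re\langle X,F\rangle\le \sum_j \sigma_j(X)\phi_j$, with equality precisely when $X$ admits an SVD sharing the singular vectors $U,V$ of $F$. Hence for any fixed vector of singular values $s=(s_j)$ the infimum of $g$ over matrices with $\sigma_j(X)=s_j$ is attained at $X=\soc_{f}(F)$ with $f(\phi_j)=s_j$, reducing the problem to
\[
\min_{s_1,\dots,s_{\min(M,N)}\ge 0}\;\sum_j p_{\phi_j}(s_j), \qquad p_\phi(s)=\sigma_0^2-(\max(\sigma_0-s,0))^2+\bigl(1+\tfrac{\alpha}{2}\bigr)s^2-2s\phi.
\]

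Next I would solve the separable scalar problem $\min_{s\ge 0} p_\phi(s)$ for each $\phi\ge 0$ by splitting on the cases $s\in[0,\sigma_0)$ and $s\ge\sigma_0$. On $[0,\sigma_0)$, the expansion $(\sigma_0-s)^2=\sigma_0^2-2\sigma_0 s+s^2$ gives $p_\phi(s)=2\sigma_0 s+\tfrac{\alpha}{2}s^2-2s\phi$, whose unconstrained minimizer is $s=\tfrac{2(\phi-\sigma_0)}{\alpha}$; this lies in $[0,\sigma_0)$ exactly when $\sigma_0\le\phi<(1+\tfrac{\alpha}{2})\sigma_0$. On $[\sigma_0,\infty)$, $p_\phi(s)=\sigma_0^2+(1+\tfrac{\alpha}{2})s^2-2s\phi$ is a convex quadratic with unconstrained minimizer $s=\phi/(1+\tfrac{\alpha}{2})$, which lies in $[\sigma_0,\infty)$ exactly when $\phi\ge(1+\tfrac{\alpha}{2})\sigma_0$. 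A direct comparison of the two branch values at $s=\sigma_0$ (where the piecewise function is continuous) together with the fact that $p_\phi$ is strictly convex on each branch shows that when $\phi<\sigma_0$ the minimum is at $s=0$. Collecting the three cases recovers exactly the formula defining $f_\alpha$.

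Finally I would justify existence of a global minimizer. The functional $\N_F^{**}$ is feasible (it dominates $\|X-F\|_2^2-\|F\|_2^2$ up to bounded terms), hence so is $\N_F^{**}+\tfrac{\alpha}{2}\|\cdot\|_2^2$, and being convex and l.s.c.\ with bounded sublevel sets it attains its infimum; the argument above then shows the attaining $X$ must be of the form $\soc_f(F)$ with $f(\phi_j)=f_\alpha(\phi_j)$, i.e.\ $X=\soc_{f_\alpha}(F)$.

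The main subtlety will be the von Neumann reduction step: one must argue that sharing singular vectors with $F$ is optimal even when $F$ has repeated or vanishing singular values, so that an SVD of $X$ can be chosen with the same $U,V$ as $F$ up to the usual indeterminacy in the degenerate subspaces; standard treatments of the trace inequality handle this, but some care is needed to phrase the conclusion as ``$X=\soc_{f_\alpha}(F)$''. Everything else reduces to the elementary scalar calculus above.
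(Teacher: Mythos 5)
Your proposal is correct and follows essentially the same route as the paper: a von Neumann–inequality reduction to a separable scalar problem in the singular values, followed by elementary one-variable calculus on each branch, which is exactly the paper's (more compressed) argument citing \cite{de1994exposed} for the equality case in von Neumann's inequality. The only cosmetic wrinkle is that for $\alpha=0$ the first branch is affine rather than strictly convex, but the middle interval is then empty and the conclusion that $\mathfrak{S}_{f_0}(F)$ is a (possibly non-unique) minimizer still holds, consistent with Proposition \ref{p1}.
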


\begin{proof}
By von-Neumann's inequality both problems are solved by a matrix of the form $X=U\Sigma_{\sigma} V^*$, (for a detailed version of this inequality, which also provides the above information on the singular vectors at optimum, see e.g. \cite{de1994exposed}). Therefore the singular values are found by minimizing  $$\sigma\mapsto\sigma_0^2-\para{\max_{}\para{\sigma_0-\sigma,0}}^2+(\sigma-\phi_j)^2+\frac{\alpha}{2}\sigma^2.$$ Differentiation of this expression shows that the minimum given by $f_\alpha(\phi_j)$.
\end{proof}
Note that for $\alpha > 0$ the objective is strictly convex and the minimum is therefore unique.
The next proposition characterizes all the solutions for the case $\alpha = 0$.

\begin{proposition}\label{p1}
Let $F=U\Sigma_{\phi} V^*$. If $\alpha=0$ all solutions of \eqref{eq:augmentedprimal} are of the form $U\Sigma_\sigma V^*$, where $\sigma$ is given by
\begin{equation}\label{4}\left\{\begin{array}{ll}
           \sigma_j=\phi_j & \phi_j>\sigma_0 \\
           \sigma_j=\mu_j & \phi_j=\sigma_0 \\
           0 & \phi_j<\sigma_0
         \end{array}\right.
\end{equation}
and $\mu_j$ is a free parameter which can be chosen in $0\leq \mu_j\leq \sigma_0$.
The solution of the non-convex problem \eqref{eq:rankobj} is also of the form \eqref{4}, but here $\mu_i$ must be chosen to be either $\sigma_0$ or $0$.
In particular, both \eqref{eq:rankobj} and \eqref{eq:augmentedprimal} are solved by $\mathfrak{S}_{f_{0}}(F)$
\end{proposition}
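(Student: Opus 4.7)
The plan is to reduce both minimization problems to separable one-dimensional problems in the singular values, then carry out an elementary case analysis on each one-dimensional problem. The argument parallels (and in fact extends) the one used in Proposition \ref{p3}.

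\textbf{Step 1 (Reduction to diagonal form).} As in the proof of Proposition \ref{p3}, invoke von Neumann's trace inequality (in the sharpened form of \cite{de1994exposed}) to conclude that any minimizer of \eqref{eq:augmentedprimal} with $\alpha=0$, as well as any minimizer of \eqref{eq:rankobj}, must share the singular vectors of $F$. Thus we may write the candidate solution as $X = U\Sigma_\sigma V^*$ and the objective decouples across the indices $j$. Note that for $\N_F(X)$ this step is essentially the Eckart--Young theorem, while for $\N_F^{**}$ it follows because the convex envelope is expressed via the singular values of $X$ together with the Frobenius distance to $F$.

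\textbf{Step 2 (The envelope problem).} For fixed $j$, I would minimize
\begin{equation*}
h_j(\sigma) \;=\; \sigma_0^2 - (\max(\sigma_0-\sigma,0))^2 + (\sigma-\phi_j)^2
\end{equation*}
over $\sigma\ge 0$, splitting into the two regimes. On $\sigma\ge\sigma_0$, $h_j(\sigma)=\sigma_0^2+(\sigma-\phi_j)^2$, which on this interval is minimized at $\sigma=\max(\phi_j,\sigma_0)$. On $0\le\sigma<\sigma_0$, a direct expansion of $(\sigma_0-\sigma)^2$ yields $h_j(\sigma)=2\sigma(\sigma_0-\phi_j)+\phi_j^2$, a linear function of $\sigma$. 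Comparing the three cases $\phi_j>\sigma_0$, $\phi_j<\sigma_0$, $\phi_j=\sigma_0$ then gives exactly the structure \eqref{4}: the unique minimizer is $\phi_j$ when $\phi_j>\sigma_0$, the unique minimizer is $0$ when $\phi_j<\sigma_0$, and in the borderline case $\phi_j=\sigma_0$ the function $h_j$ is constant on $[0,\sigma_0]$, so any value $\mu_j\in[0,\sigma_0]$ is optimal.

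\textbf{Step 3 (The non-convex problem).} For fixed $j$, the one-dimensional problem is
\begin{equation*}
\min_{\sigma\ge 0}\;\sigma_0^2\,\mathbf{1}[\sigma\neq 0] + (\sigma-\phi_j)^2.
\end{equation*}
If $\sigma=0$, the value is $\phi_j^2$; if $\sigma>0$, the minimum is $\sigma_0^2$, attained uniquely at $\sigma=\phi_j$ (for $\phi_j>0$). Comparing $\phi_j^2$ to $\sigma_0^2$ gives the same trichotomy as before, except that when $\phi_j=\sigma_0$ any intermediate value $\mu_j\in(0,\sigma_0)$ is strictly worse, since then $\sigma>0$ forces the rank penalty $\sigma_0^2$ plus a strictly positive deviation term. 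Hence in the non-convex case $\mu_j\in\{0,\sigma_0\}$.

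\textbf{Step 4 (The common solution $\soc_{f_0}(F)$).} Specializing the piecewise formula for $f_\alpha$ to $\alpha=0$ gives $f_0(x)=x$ for $x\ge\sigma_0$ and $f_0(x)=0$ for $x<\sigma_0$. This corresponds in \eqref{4} to the choice $\mu_j=\sigma_0$ (equivalently, $\mu_j=\phi_j$) on the critical set $\{\phi_j=\sigma_0\}$, which lies both in the admissible interval $[0,\sigma_0]$ of the envelope problem and in the binary set $\{0,\sigma_0\}$ required by the non-convex problem. Hence $\soc_{f_0}(F)$ simultaneously solves \eqref{eq:rankobj} and \eqref{eq:augmentedprimal} for $\alpha=0$.

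The only delicate point is the borderline case $\phi_j=\sigma_0$, where the envelope problem degenerates to a constant on an interval while the non-convex problem singles out two isolated optima; being careful to distinguish strict from non-strict convexity on each subregion is what makes the two claims coexist in one statement.
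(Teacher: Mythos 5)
Your proposal is correct and follows essentially the same route as the paper: reduce to the singular values via von Neumann's inequality (in the sharpened form of \cite{de1994exposed}) and then solve the resulting scalar problems by case analysis, with the only difference being that you write out explicitly the one-dimensional computations the paper dismisses as ``easy to see.'' The borderline analysis at $\phi_j=\sigma_0$ and the identification of $f_0$ as the $\alpha=0$ specialization of $f_\alpha$ are both handled correctly.
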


\begin{proof}
Again, von-Neumann's inequality implies that both problems are solved by a matrix of the form $X=U\Sigma_{\sigma} V^*$. To chose the $\sigma_j$'s in the case of $\I^{\ast\ast}$, we need to minimize the functional $$\sigma\mapsto\sigma_0^2-\para{\max_{}\para{\sigma_0-\sigma,0}}^2+(\sigma-\phi_j)^2,$$
and it is easy to see that the solutions are as stated in \eqref{4}. The corresponding statement for $\I$ is even simpler, we omit the details. The final statement is obtained by setting $\mu_j=\sigma_0$ whenever there is ambiguity, i.e. $\phi_j=\sigma_0$.
\end{proof}
Figure~\ref{fig:falpha} shows examples of $f_\alpha(x)$ when $\sigma_0 = 1$ for different values of alpha. Note that in all cases $f_\alpha$ will set singular values less than $\sigma_0$ to zero.
For $\alpha = 0$ singular values larger than $\sigma_0$ remain unaffected by $f_\alpha$, while for $\alpha > 0$ these are subjected to a penalty.
We also remark that the so called "hard thresholding" performend by $f_0$ gives a minimizer of both $\N_F$ and $\N^{**}_F$ although it may not be unique in either case.
\begin{figure}[htb]
\begin{center}
\includegraphics[width=50mm]{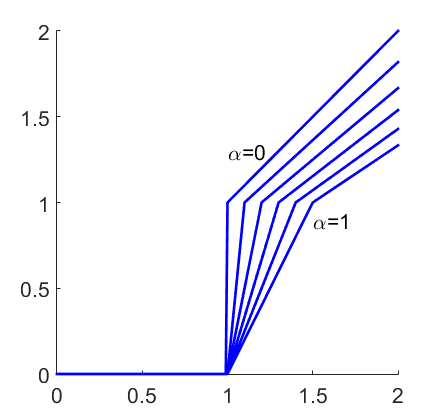}
\end{center}
\caption{The function $f_\alpha(x)$ for $\alpha = 0,0.2,...,1$, and $\sigma_0=1$.}
\label{fig:falpha}
\end{figure}

\subsection{Dual ascent}\label{subsec1}

The dual ascent algorithm \eqref{danc1}-\eqref{danc2} for trying to minimize
\eqref{eq:rankobj}
reads
\begin{equation}\label{nonconvex_da}
\begin{dcases}
X^{n+1} &= \argmin_{X} \sigma_0^2\rank(X) +\|X-F\|_2^2 +\langle \Lambda^n, X \rangle \\
\Lambda^{n+1} &= \Lambda^{n} + \alpha_n\P_{\M^\perp}(X^{n+1}),
\end{dcases}
\end{equation}
The Lagrange multiplier can easily be absorbed into the quadratic term, since \begin{equation}\label{g7}\|X-F\|_2^2+\langle \Lambda, X \rangle =\fro{X-\para{F-\frac{\Lambda}{2}}}+ \langle \Lambda, F \rangle-\fro{\frac{\Lambda}{2}},\end{equation} and $\langle \Lambda, F \rangle-\fro{\frac{\Lambda}{2}}$ is independent of $X$.
Using \eqref{g7}, the scheme becomes;
\begin{equation}\label{nonconvex_darepeat}
\begin{dcases}
X^{n+1} &=\mathfrak{S}_{f_0}\para{F-\frac{\Lambda^n}{2}}\\
\Lambda^{n+1} &= \Lambda^{n} + \alpha_n\P_{\M^\perp}(X^{n+1}),
\end{dcases}
\end{equation}
with $\Lambda^0=0$.
By the remarks in Section \ref{sec4} (as well as Proposition \ref{p1}), it is also a dual ascent scheme for the minimization of $\I_{F}^{\ast\ast}$ over $\M$.

Similar to Section~\ref{secconv} we let
\begin{equation}\label{tg}n_k=\argmax_{0\leq n\leq k}-\I^*_{F}(-\Lambda^n).\end{equation}

\begin{theorem}\label{t2}
Let $\alpha_n$ be a sequence satisfying $0< \alpha_n\leq 1$ and \eqref{ak}. Let $(X^n)_{n=1}^\infty$ and $(\Lambda^n)_{n=1}^\infty$ be given by \eqref{nonconvex_darepeat}, and $n_k$ by \eqref{tg}. It then holds that $(X^{n_k})_{k=1}^\infty$ and $(\Lambda^{n_k})_{k=1}^\infty$ have convergent subsequences. Moreover, for their limits, $X^\star$ and $\Lambda^\star$, we have that if $F-\Lambda^\star/2$ has no singular value equal to $\sigma_0$, then $X^\star$ is a solution to $$\argmin_{X\in\M} \I_{F}^{**}(X).$$
\end{theorem}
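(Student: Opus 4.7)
The plan is to apply Theorem \ref{t1} with $\N = \N_F$. Feasibility of $\N_F$ is immediate: it is l.s.c.\ (the rank function is l.s.c.), proper, bounded below by $0$, and $\N_F(X) \geq \|X-F\|_2^2$ yields \eqref{lok}. The substantive work, and the main obstacle, is verifying the boundedness hypothesis on $(\Lambda^n)$ and $(X^n)$ required by Theorem \ref{t1}.

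For boundedness of $(\Lambda^n)$, set $G_n := F - \Lambda^n/2$ so that the update reads $X^{n+1} = \mathfrak{S}_{f_0}(G_n)$. Since $\mathfrak{S}_{f_0}$ either preserves each singular value or sets it to zero (the latter happening only when it is smaller than $\sigma_0$), one obtains the $n$-independent bound
\[ \|X^{n+1} - G_n\|_2^2 \leq \min(M,N)\,\sigma_0^2. \]
Using $\Lambda^n \in \M^\perp$, this yields
\[ \langle \Lambda^n, \P_{\M^\perp}(X^{n+1}) \rangle = \langle \Lambda^n, X^{n+1}-G_n\rangle + \langle\Lambda^n,F\rangle - \tfrac{1}{2}\|\Lambda^n\|^2 \leq C\|\Lambda^n\| - \tfrac{1}{2}\|\Lambda^n\|^2 \]
for a constant $C$ depending only on $F,\sigma_0,M,N$. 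The key point is the dominant negative quadratic term. Combined with the trivial bound $\|X^{n+1}\|_2^2 \leq \|G_n\|_2^2 \leq 2\|F\|_2^2 + \|\Lambda^n\|^2/2$, the assumption $\alpha_n \leq 1$, and Young's inequality applied to the linear $\|\Lambda^n\|$ term, expanding
\[ \|\Lambda^{n+1}\|^2 = \|\Lambda^n\|^2 + 2\alpha_n\langle\Lambda^n,\P_{\M^\perp}(X^{n+1})\rangle + \alpha_n^2\|\P_{\M^\perp}(X^{n+1})\|^2 \]
yields a recursion of the form $\|\Lambda^{n+1}\|^2 \leq (1 - \alpha_n/4)\|\Lambda^n\|^2 + D\alpha_n$ for some constant $D$. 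Starting from $\Lambda^0 = 0$, a one-line induction gives $\|\Lambda^n\|^2 \leq 4D$ for all $n$, and then $\|X^{n+1}\|_2 \leq \|F\|_2 + \|\Lambda^n\|/2$ shows $(X^n)$ is bounded as well.

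With both hypotheses of Theorem \ref{t1} in hand, it produces convergent subsequences $(X^{n_{k_j}})\to X^\star$ and $(\Lambda^{n_{k_j}})\to \Lambda^\star$ satisfying \eqref{convex_problem4}. To strengthen this to $X^\star\in\argmin_{X\in\M}\N_F^{**}(X)$, the uniqueness clause of Theorem \ref{t1} requires that $\argmin_X \N_F^{**}(X) + \langle X,\Lambda^\star\rangle$ be a singleton. Completing the square in the quadratic part of $\N_F^{**}$ gives
\[ \N_F^{**}(X) + \langle X, \Lambda^\star\rangle = \N_{F-\Lambda^\star/2}^{**}(X) + \text{const}, \]
so the problem reduces to $\argmin_X \N_{F-\Lambda^\star/2}^{**}(X)$. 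By Proposition \ref{p1} this minimizer is unique precisely when no singular value of $F - \Lambda^\star/2$ equals $\sigma_0$, which is the stated hypothesis. Invoking Theorem \ref{t1} closes the argument.
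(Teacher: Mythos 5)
Your proposal is correct and follows essentially the same route as the paper: feasibility of $\N_F$ plus Theorem \ref{t1}, the boundedness of $(\Lambda^n)$ proved exactly as in Proposition \ref{l1} (the negative quadratic term from $\langle \Lambda^n, -\Lambda^n/2\rangle$, the bound $\|(\mathfrak{S}_{f_0}-I)(F-\Lambda^n/2)\|_2\leq\sqrt{K}\sigma_0$, and the contraction property of $\P_{\M^\perp}$ and $\mathfrak{S}_{f_0}$), and the completing-the-square identity reducing uniqueness to Proposition \ref{p1} for $F-\Lambda^\star/2$. The only difference is cosmetic: you finish the boundedness argument with a damped recursion $\|\Lambda^{n+1}\|^2\leq(1-\alpha_n/4)\|\Lambda^n\|^2+D\alpha_n$ and induction, whereas the paper analyzes the sign of the quadratic $p(R)=-R^2/2+c_1R+c_2$ and its larger root.
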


\begin{proof}
It is clear that $\I_{F}$ is feasible so Theorem \ref{t1} applies.
The fact that both sequences $(X^{n})_{n=1}^\infty$ and $(\Lambda^{n})_{n=1}^\infty$ are bounded is shown separately in Proposition \ref{l1} below. Assuming this, Alaoglu's theorem implies that both sequences $(X^{n_k})_{k=1}^\infty$ and $(\Lambda^{n_k})_{k=1}^\infty$ have convergent subsequences. For the final statement, it follows by \eqref{g7} that $$\I_{F}^{\ast\ast}+\scal{X,\Lambda^{\star}}=\I_{F-\Lambda^\star/2}^{\ast\ast}(X)+const,$$ and hence Theorem \ref{t1} and Proposition \ref{p1} together imply that the corresponding minimization problem has a unique solution if and only if $F-\Lambda^\star/2$ has no singular value equal to $\sigma_0$.

\end{proof}

\begin{proposition}\label{l1}
If $0<\alpha_n\leq 1$ for all $n\in\mathbb{N}$, then the sequences $(X^n)_{n=1}^\infty$ and $(\Lambda^n)_{n=1}^\infty$ given by \eqref{nonconvex_darepeat}, are bounded.
\end{proposition}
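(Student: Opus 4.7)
The plan is to bound $(\Lambda^n)$ first and then deduce the bound on $(X^n)$ from it. The key insight is that the shift by $F$ in the primal update can be used to produce a contraction in the dual update.

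First I would exploit the explicit form of the primal update. Writing $M^n = F - \Lambda^n/2$, Proposition \ref{p1} tells us that $X^{n+1} = \mathfrak{S}_{f_0}(M^n)$ is the hard-thresholding of $M^n$ at level $\sigma_0$. Consequently $M^n$ and $X^{n+1}$ share singular vectors, and the singular values of $M^n - X^{n+1}$ are each either $0$ or at most $\sigma_0$. This yields the two useful bounds
\begin{equation}
\|X^{n+1}\|_2 \leq \|M^n\|_2 \leq \|F\|_2 + \tfrac{1}{2}\|\Lambda^n\|, \qquad \|M^n - X^{n+1}\|_2 \leq \sqrt{r}\,\sigma_0,
\end{equation}
where $r = \min(M,N)$.

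Next I would rewrite the dual update in a form that exhibits a contraction. Since $\Lambda^n \in \M^\perp$, the identity $X^{n+1} = F - \tfrac{1}{2}\Lambda^n - (M^n - X^{n+1})$ combined with projecting onto $\M^\perp$ gives
\begin{equation}
\Lambda^{n+1} = (1 - \tfrac{\alpha_n}{2})\Lambda^n + \alpha_n\,\mathcal{P}_{\M^\perp}(F) - \alpha_n\,\mathcal{P}_{\M^\perp}(M^n - X^{n+1}).
\end{equation}
Because $0 < \alpha_n \leq 1$, the coefficient $1 - \alpha_n/2$ lies in $[1/2, 1)$, so applying the triangle inequality and the bound $\|M^n - X^{n+1}\|_2 \leq \sqrt{r}\,\sigma_0$ yields
\begin{equation}
\|\Lambda^{n+1}\| \leq (1 - \tfrac{\alpha_n}{2})\|\Lambda^n\| + \alpha_n C, \qquad C := \|F\|_2 + \sqrt{r}\,\sigma_0.
\end{equation}

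Finally, a one-line induction using $\Lambda^0 = 0$ shows $\|\Lambda^n\| \leq 2C$ for every $n$: if $\|\Lambda^n\| \leq 2C$, then the bound above gives $\|\Lambda^{n+1}\| \leq (1 - \alpha_n/2)(2C) + \alpha_n C = 2C$. Substituting this into the primal estimate $\|X^{n+1}\|_2 \leq \|F\|_2 + \|\Lambda^n\|/2$ yields $\|X^{n+1}\|_2 \leq \|F\|_2 + C$, so both sequences are bounded.

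I do not anticipate a real obstacle here: once one observes that the hard-thresholding residual $M^n - X^{n+1}$ has operator norm at most $\sigma_0$ (and hence Frobenius norm at most $\sqrt{r}\,\sigma_0$), the dual update is almost by construction a contraction towards a ball of radius $\sim C$, and the inductive bound is immediate. The only mild subtlety is remembering that $\Lambda^n \in \M^\perp$, which is what turns $\alpha_n \mathcal{P}_{\M^\perp}(\Lambda^n)/2$ into the contractive term $\alpha_n \Lambda^n/2$.
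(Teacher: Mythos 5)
Your proof is correct, and it rests on the same two observations that drive the paper's argument -- that the hard-thresholding residual $(I-\mathfrak{S}_{f_0})(F-\Lambda^n/2)$ has Frobenius norm at most $\sqrt{\min(M,N)}\,\sigma_0$, and that the $-\Lambda^n/2$ inside the primal update, combined with $\Lambda^n\in\M^\perp$, produces the contractive coefficient $1-\alpha_n/2$ in the dual update -- but the execution is genuinely different. The paper expands $\|\Lambda^{n+1}\|_2^2$ as a square, applies Cauchy--Schwarz to the cross term $\langle\Lambda^n,\P_{\M^\perp}\mathfrak{S}_{f_0}(F-\Lambda^n/2)\rangle$, and arrives at an inequality of the form $\|\Lambda^{n+1}\|_2^2\leq R^2+\alpha_n p(R)$ with $p$ a concave quadratic, after which boundedness follows from a case analysis on the sign of $p$ (norm non-increasing when $p(R)\leq 0$, uniformly bounded by $\sqrt{R_0^2+\alpha_n p_{\max}}$ otherwise). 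You instead keep everything linear: you decompose $\Lambda^{n+1}=(1-\tfrac{\alpha_n}{2})\Lambda^n+\alpha_n\P_{\M^\perp}(F)-\alpha_n\P_{\M^\perp}(M^n-X^{n+1})$, apply the triangle inequality, and close the argument by exhibiting the explicit invariant ball $\|\Lambda^n\|\leq 2\bigl(\|F\|_2+\sqrt{r}\,\sigma_0\bigr)$ by induction (using $0<\alpha_n\leq 1$ only to keep $1-\alpha_n/2\in[1/2,1)$, exactly as the paper uses it to keep its quadratic coefficient in check). Your route is more elementary, avoids the squared-norm bookkeeping, and yields clean explicit constants for both $\|\Lambda^n\|$ and $\|X^n\|_2$; the paper's energy-type estimate is slightly more robust in the sense that it would still work if one only had an inner-product (rather than a normwise affine) bound on the update direction, but for this proposition the two arguments deliver the same conclusion.
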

\begin{proof}
The sequence $(X^n)_{n=1}^\infty$ is clearly bounded if $(\Lambda^n)_{n=1}^\infty$ is. To prove that $(\Lambda^n)_{n=1}^\infty$ is bounded, first note that \begin{equation}\label{1}\Lambda^{n+1}=\Lambda^n+\alpha_n \P_{\M^\perp}\mathfrak{S}_{f_0}\left(F-\frac{\Lambda^n}{2}\right).
\end{equation}
Since $\Lambda^{n}\in\mathsf{Ran} \P_{\M^\perp}$ and $\P_{\M^\perp}$ is self-adjoint, we also have \begin{align*}&\scal{\Lambda^{n}, \P_{\M^\perp}\mathfrak{S}_{f_0}(F-\frac{\Lambda^n}{2})}_2=\scal{\Lambda^{n}, \mathfrak{S}_{f_0}(F-\frac{\Lambda^n}{2})}_2=\scal{\Lambda^n,F-\frac{\Lambda^n}{2}}_2+\scal{\Lambda^{n}, (\mathfrak{S}_{f_0}-I)(F-\frac{\Lambda^n}{2})}_2.\end{align*}
The operator $I-\mathfrak{S}_{f_0}$ keeps only the singular values lower than $\sigma_0$, and sets the other ones to 0. Thus $\|(\mathfrak{S}_{f_0}-I)(F-\frac{\Lambda^n}{2})\|_2^2\leq K\sigma_0^2$, where $K=\min(M,N)$ is the amount of singular values. Setting $R=\|{\Lambda^n}\|_2$, the Cauchy-Schwartz inequality thus gives \begin{equation}\label{2}|\scal{\Lambda^{n},\P_{\M^\perp}\mathfrak{S}_{f_0}(F-\frac{\Lambda^n}{2})}|\leq R\|{F}\|_2-\frac{R^2}{2}+\sqrt{K}\sigma_0 R.\end{equation}
Moreover, since both $\P_{\M^\perp}$ and $\mathfrak{S}_{f_0}$ are contractions we have
\begin{equation}\label{3}\|\P_{\M^\perp}\mathfrak{S}_{f_0}(F-\frac{\Lambda^n}{2})\|_2^2\leq \|F-\frac{\Lambda^n}{2}\|_2^2\leq \frac{R^2}{4}+\|F\|_2R+\|F\|_2^2.\end{equation}
Combining \eqref{1}-\eqref{3} and recalling that $0<\alpha_n\leq 1$, we get
\begin{align*}\|\Lambda^{n+1}\|_2^2&\leq R^2+2\alpha_n(R\|{F}\|_2-\frac{R^2}{2}+\sqrt{K}\sigma_0 R)+\alpha_n^2(\frac{R^2}{4}+\|F\|_2R+\|F\|_2^2)= \\& (1-\frac{\alpha_n}{2})^2R^2+2\alpha_n(R\|{F}\|_2+\sqrt{K}\sigma_0 R)+\alpha_n^2(\|F\|_2R+\|F\|_2^2)\leq (1-\frac{\alpha_n}{2}) R^2+\alpha_n(c_1 R+c_2)\end{align*}
where $c_1=3\|{F}\|_2+2\sqrt{K}\sigma_0$ and $c_2=\|F^2\|_2$.
Note that these constants are independent of $n$. Setting $p(R) = - \frac{R^2}{2}+c_1 R+c_2$ our inequality can be written $$\|\Lambda^{n+1}\|_2^2\leq R^2+\alpha_n p(R).$$

Whenever $p(R)\leq 0$ we clearly have
$\|\Lambda^{n+1}\|_2\leq R$.
Now suppose that $p$ is not negative everywhere.
Then it has two real roots, and takes positive values between them.
Let us denote the larger root by $R_0$ and the maximal value by $p_{\text{max}}$.
Recall that $R=\|\Lambda^n\|_2$. For $R\leq R_0$ the quantity $\|\Lambda^{n+1}\|_2$ can now be uniformly bounded by
the constant $\sqrt{R_0^2 + \alpha_n p_{\text{max}}}$, and for $R> R_0$ the earlier inequality reads $\|\Lambda^{n+1}\|_2\leq \|\Lambda^n\|_2$. Summing up, we have shown that $$\|\Lambda^{n+1}\|_2\leq\max(\sqrt{R_0^2+\alpha_n p_{\text{max}}},\|\Lambda^{n}\|_2),$$ from which it clearly follows that $(\Lambda^n)_{n=1}^\infty$ is a bounded sequence.
\end{proof}

\subsection{Augmented dual ascent}\label{subsec2}

By \eqref{g7} and Proposition \ref{p3} it follows that the augmented dual ascent scheme \eqref{convex_augmented_daN1}-\eqref{convex_augmented_daN2} takes the form
\begin{equation}\label{nonconvex_ada}
\begin{dcases}
X^{n+1} &=\mathfrak{S}_{f_{\alpha}}\para{F-\frac{\Lambda^n}{2}},\\
\Lambda^{n+1} &= \Lambda^{n} + \alpha\P_{\M^\perp}(X^{n+1}),
\end{dcases}
\end{equation}
with $\Lambda^0=0$.
Note that the primal update of the (un-augmented) dual ascent scheme \eqref{nonconvex_darepeat} is the limiting case as $\alpha\rightarrow 0$, which has been claimed earlier in the paper. Theorem \ref{t3} and Corollary \ref{c1} immediately gives

\begin{theorem}\label{t4}
The sequences $(X^{n})_{n=1}^\infty$ and $(\Lambda^{n})_{n=1}^\infty$ converge to some limits $X^\star$ and $\Lambda^\star$, where $X^\star$ is the solution to \begin{equation}\label{SLRA_aug}\argmin_{X\in\M}\I_{F}^{**}(X)+\frac{\alpha}{2}\|X\|_2^2.\end{equation}
\end{theorem}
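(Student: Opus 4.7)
The plan is to reduce Theorem \ref{t4} to the abstract convergence result Theorem \ref{t33} (equivalently, Theorem \ref{t3} combined with Corollary \ref{c1}) applied with $\N = \N_F$. First, I would verify that the iteration \eqref{nonconvex_ada} is literally the instance of the augmented dual ascent scheme \eqref{convex_augmented_daN1}-\eqref{convex_augmented_daN2} for $\N_F$: using \eqref{g7}, the primal subproblem $\argmin_X \N_F^{**}(X) + \langle X,\Lambda^n\rangle + \frac{\alpha}{2}\|X\|_2^2$ differs from $\argmin_X \N_{F-\Lambda^n/2}^{**}(X) + \frac{\alpha}{2}\|X\|_2^2$ only by a term independent of $X$, and Proposition \ref{p3} identifies this minimizer as $\mathfrak{S}_{f_\alpha}(F-\Lambda^n/2)$, which matches the $X^{n+1}$ update in \eqref{nonconvex_ada}.

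Second, I would verify the hypotheses of Theorem \ref{t33}. The functional $\N_F(X) = \sigma_0^2\rank(X) + \|X-F\|_2^2$ is proper, bounded below by $0$, and l.s.c.\ (since $\rank$ is l.s.c.\ on matrices and $\|\cdot-F\|_2^2$ is continuous); the quadratic data-fidelity term forces $\N_F(X)/\|X\|_2 \to \infty$ as $\|X\|_2 \to \infty$, so $\N_F$ is feasible in the sense of Definition \ref{deffeas}. Moreover $\N_F$ is finite-valued, so the finite-valued branch of Theorem \ref{t33} applies and delivers convergence of both $(X^n)_{n=1}^\infty$ and $(\Lambda^n)_{n=1}^\infty$ to limits $X^\star$ and $\Lambda^\star$, with $X^\star$ the (unique, by strict convexity) solution of \eqref{SLRA_aug}.

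Since every ingredient slots into machinery already proved, there is essentially no new obstacle; the real work was done earlier, in Proposition \ref{p3} to get the closed-form primal update, and in Proposition \ref{p4} to propagate feasibility from $\N_F$ to $\N_F^{**}$. The only checkpoint worth making explicit is that $\N_F^{**}$, given by \eqref{probmod}, is visibly a convex, finite-valued, feasible functional, so that Corollary \ref{c1} may be invoked inside Theorem \ref{t33} to secure boundedness of $(\Lambda^n)$ without any additional assumption, after which the conclusion is immediate.
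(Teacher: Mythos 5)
Your proposal is correct and takes essentially the same route as the paper: the paper likewise identifies \eqref{nonconvex_ada} with the scheme \eqref{convex_augmented_daN1}--\eqref{convex_augmented_daN2} via \eqref{g7} and Proposition \ref{p3}, and then obtains Theorem \ref{t4} immediately from Theorem \ref{t3} and Corollary \ref{c1} (equivalently Theorem \ref{t33} with $\K=\N_F^{**}$), using that $\N_F$ is feasible and finite-valued so that $\N_F^{**}$ is as well. Your explicit checks of feasibility, of finite-valuedness, and of uniqueness via strict convexity are exactly the (unstated) details behind the paper's ``immediately gives''.
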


\section{Numerical Evaluation}\label{sec:numeval}

In this section we evaluate the proposed methods in the context of rank minimization with Hankel constraints. An example of an application is the decomposition of a signal into complex exponentials. There is a well known connection between the rank of a Hankel matrix and the number of exponentials needed for the generating function of the Hankel matrix, usually referred to as \emph{Kronecker's theorem} \cite{Kronecker}; Given a complex valued vector $f$ with elements
\begin{equation}\label{Kronecker}
f(j) = \sum_{p=1}^{P} c_p e^{\zeta_p j}, \quad c_p,\zeta_p\in \mathbb{C}, \quad -N \le j \le N,
\end{equation}
the Hankel matrix $F$ generated by the vector $f$ (i.e., the elements of $F$ satisfy
$
F(j,k)=f(j+k-N-1)$) is of rank $P$ (with the exception of degenerate cases), and conversely, if a Hankel matrix has rank $P$ then its generating vector is of the form \eqref{Kronecker} (again with the exception of degenerate cases), see \cite{ACgeneraldomain}.

\subsection{Convergence Evaluation}
We first perform a quantitative evaluation of the convergence of our algorithms.
We compare three approaches:
\begin{description}
\item[DA] - The dual ascent scheme \eqref{nonconvex_darepeat} with the step-sizes $\alpha_n = \frac{1}{n+1}$.
\item[ADA] - The augmented dual ascent scheme \eqref{nonconvex_ada} with a step-size $\alpha$.
\item[mod-ADA] - The augmented dual ascent scheme where we replace the fixed step-size
$\alpha$ with $\alpha_n = \frac{2}{(n+1)^2}+\alpha$ as in \eqref{convex_augmented_daNEW1}-\eqref{convex_augmented_daNEW2}. This allows the algorithm to take large steps in the beginning of the optimization, which as we shall see greatly accelerates convergence.
\end{description}
To create low rank Hankel matrices we make use of \emph{Kronecker's theorem} and randomly select sums of exponentials. For the results displayed in Figure~\ref{fig:alphaconvergence} we used
\begin{equation}
f(t) = \sum_{i=1}^4 a e^{bt}\cos(10ct+d\pi),
\label{eq:cossum}
\end{equation}
where $a$ and $d$ are uniformly distributed over $[0,1]$ and  $b$ and $c$ belong to a normal
distribution with mean zero and standard-deviation one.
We sampled the function in $200$ equally spaced points between $-1$ and $1$ and formed a
$101 \times 100$ Hankel matrix. Since each term in the sum \eqref{eq:cossum} consists of two complex exponentials the resulting matrix, which we use as ground truth, will have rank 8.
To generate the measurement matrix we added Gaussian noise with $0.1$ standard-deviation to each element of the ground truth matrix.

Figure~\ref{fig:alphaconvergence} illustrates the convergence of the three methods.
Here we solved $100$ instances of the problem and plotted average primal and dual objective values for the first $100$ iterations.
In each iteration we generated a primal feasible solution by projecting the current estimate $X^n$ onto the closest Hankel matrix $X = \P_\M(X^n)$. To compute primal objective values we then used
$\N^{**}_F(X)$ for DA and $\N^{**}_F(X)+\frac{\alpha}{2}\|X\|_2^2$ for both ADA and mod-ADA,
since this is what the methods will converge to.

To evaluate dual objectives we used conjugates of the above primal functions. For DA the dual is simply $-\N^*_F(-\Lambda^n)$ with $\N_F^{**}$ as in \eqref{fenchel1}. For ADA and mod-ADA the conjugate $\left(\N^{**}_F(X)+\frac{\alpha}{2}\|X\|_2^2\right)^*$ can be computed by noting that
$$
 \langle X,Y \rangle - \N^{**}_F(X)-\frac{\alpha}{2}\|X\|_2^2 =
-\left(\N^{**}_{F+\frac{Y}{2}}(X)-\frac{\alpha}{2}\|X\|_2^2\right)+\left\|F+\frac{Y}{2}\right\|_2^2-\|F\|_2^2.
$$
Inserting $Y=-\Lambda^n$ and maximizing the right hand side with respect to $X$ shows that $X^n$ optimal. Therefore we get the dual objective function
$$
\left(\N^{**}_{F-\frac{\Lambda^n}{2}}(X^n)-\frac{\alpha}{2}\|X^n\|_2^2\right)-\left\|F-\frac{\Lambda^n}{2}\right\|_2^2+\|F\|_2^2.
$$

In Figure~\ref{fig:alphaconvergence} we ran the experiment twice, first with $\alpha = 0.1$ and then with $\alpha=0.001$. For $\alpha=0.1$ the methods seem to converge relatively fast
(see Figure~\ref{fig:alphaconvergence}, left).
For $\alpha = 0.001$ (Figure~\ref{fig:alphaconvergence}, right) the convergence of ADA becomes prohibitively slow since it is forced to very take small gradient steps.
In contrast the variable step-size of mod-ADA still generates good solutions in very few iterations.
\begin{figure}[htb]
\begin{center}
\includegraphics[width=60mm]{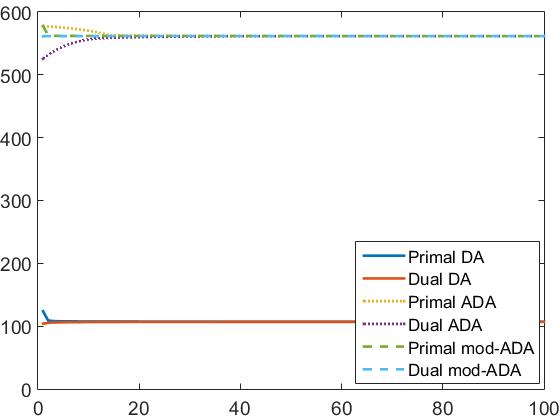}
\includegraphics[width=60mm]{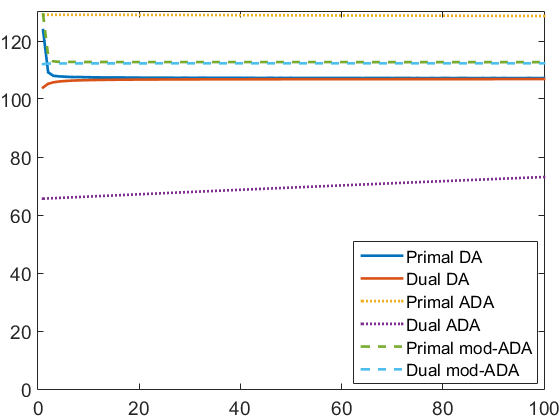}
\end{center}
\caption{Average primal and dual objectives vs. iterations for the three formulations.
Left $\alpha=0.1$ right $\alpha=0.001$.}
\label{fig:alphaconvergence}
\end{figure}

\begin{figure}[htb]
\begin{center}
\includegraphics[width=90mm]{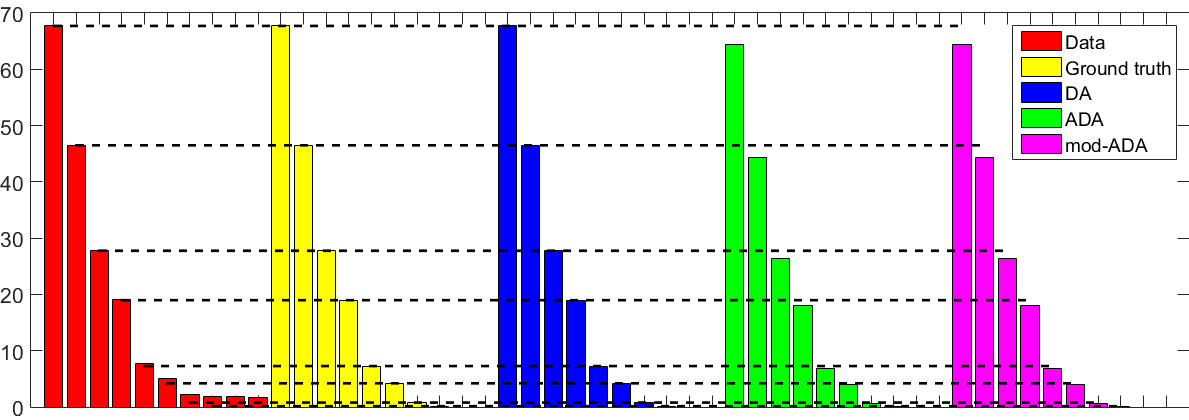}
\end{center}
\caption{The 10 leading singular values (averaged over 100 trials) for the noisy data, ground truth and the three tested methods for $\alpha=0.1$. }
\label{fig:singvals01}
\begin{center}
\includegraphics[width=90mm]{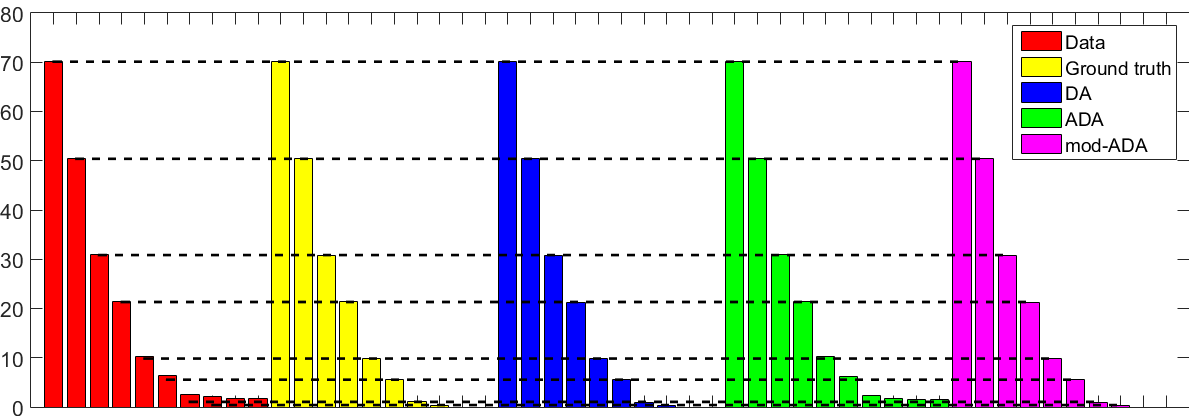}
\end{center}
\caption{The 10 leading singular values (averaged over 100 trials) for the noisy data, ground truth and the three tested methods for $\alpha=0.001$.}
\label{fig:singvals0001}
\end{figure}

In Figures~\ref{fig:singvals01} and \ref{fig:singvals0001} we show the 10 leading singular values of the solutions given by the tree methods (averaged over all trials).
For comparison we have also plotted the singular values of the measurement matrix and the ground truth (which is of rank $8$).
While convergence of ADA is relatively fast for $\alpha=0.1$ the added regularization term $\frac{\alpha}{2}\|X\|_2^2$ will penalize the larges singular values.
For ADA and mod-ADA this results in the weak shrinking bias visible in Figure~\ref{fig:singvals01}. For $\alpha=0.001$, see Figure~\ref{fig:singvals0001},
this bias is negligible but the slow convergence of ADA hinders the suppression of the small singular values.
Hence for ADA there is a trade-off between accuracy and speed of convergence.
The same tendency can be observed in Table~\ref{tab:gtdist} where we show the normalized distance to the ground truth, that is $\frac{\|H-H_{gt}\|_2^2}{\|H_{gt}\|_2^2}$, if $H_{gt}$ is the ground truth, averaged over all trials.
\begin{table}[htb]
\begin{center}
\begin{tabular}{c||c|c|c}
$\alpha$ & DA & ADA & mod-ADA\\
\hline \hline
0.1 & 0.0053 & 0.0479 & 0.0479 \\
0.001 & 0.0050  &  0.0753  &  0.0049
\end{tabular}
\end{center}
\caption{Average normalized distance to ground truth.}
\label{tab:gtdist}
\end{table}

According to the theory of Section \ref{subsec1} it may be necessary to select a subsequence to get convergence of DA. In Figure~\ref{fig:special_case_convergence} we highlight a single problem instance (extracted from the experiment above) where the variables did not seem to converge for DA. To the left we plot the primal and dual objective values and to the right we plot the distance between the primal solution $X^n$ and the ground truth during 300 iterations. The dual variable $\Lambda^n$ exhibits a similar behavior as $X^n$, note however that the dual objective values seem to converge nicely.
For comparison we also plot mod-ADA which does not exhibit the same behavior.

The exact reason for this behavior is unclear but it seems to happen for difficult problem instances, when the size of the 8th (true) singular value is at same level as of the noise.
Note that when using \eqref{eq:cossum} this may happen if for example $a$ is close to zero.
However the effects are not visible in Figure~\ref{fig:alphaconvergence} due to averaging.
\begin{figure}[htb]
\begin{center}
\includegraphics[width=60mm]{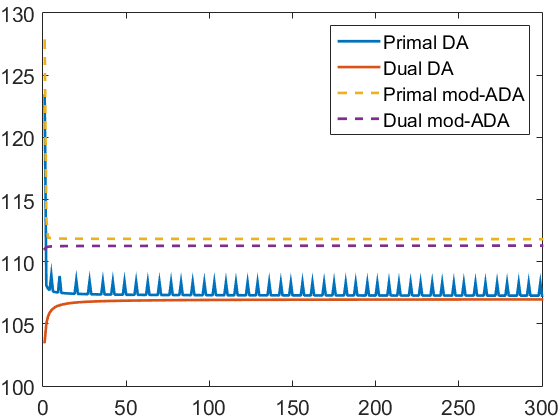}
\includegraphics[width=60mm]{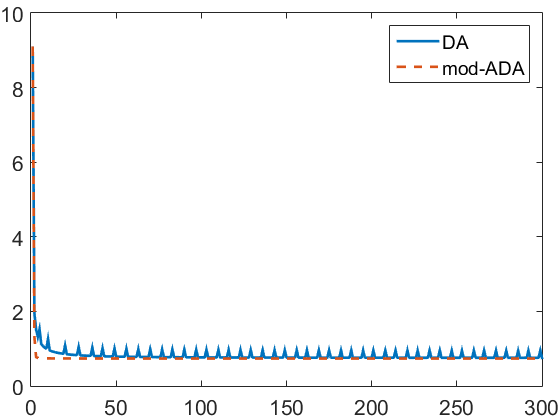}
\end{center}
\caption{Convergence of DA and mod-ADA for a difficult case. Left - Primal and dual values for 300 iterations. Right - Distance from current primal estimate to ground truth in each iteration.}
\label{fig:special_case_convergence}
\end{figure}

\subsection{Performance in signal frequency estimation}

In this section we compare our formulation to state-of-the-art frequency estimation methods.
\emph{Kroeneker's theorem} is the basis for many of these methods such as for instance ESPRIT \cite{esprit} and MUSIC \cite{MUSIC}.

We are interested in approximating a signal $f$ with a linear combination of $P$ exponentials.
We use the formulation
\begin{equation}\label{freqest_frob}
\argmin_{A \mbox{~is Hankel}} \sigma_0^2 \rank A + \| A - H(f)\|_2^2,
\end{equation}
where $\sigma_0$ is a parameter that penalizes the number of exponential functions used in the approximation. Note that due to the Hankel structure, the Frobenius norm formulation above  is equivalent (except some degenerate cases) to the weighted least squares objective
\begin{align*}
\min_a \quad & P \sigma_0^2 + \sum_{j=-N}^{N} w(j) \left| a(j)- f(j) \right|^2,\\
&\mbox{where } a(j) = \sum_{p=1}^{P} c_p e^{\zeta_p j}, \quad c_p,\zeta_p\in \mathbb{C},
\end{align*}
where $w$ is the triangle weight
$$
w(j) = N+1-|j|.
$$
This weight is undesirable, assuming that all signal samples are subjected to independent Gaussian noise, however removing it by modifying the Forbenius norm term makes primal updates much more difficult. Note that in contrast ESPRIT uses the unweighted least squares formulation, but without optimality guarantees.

\begin{table}[h]
\centering
\texttt{%
\caption{Frequencies and coefficients}
\label{tab:4exp}
\begin{tabular}{ | l | l | }
\hline
$\zeta_p$ & $c_p$ \\
\hline
5924.0i & +1.00000+i0.00000\\
804.24i        & +0.62348+i0.78183 \\
695.88i           & -0.22252+i0.97493 \\
7937.6i            &-0.90097+i0.43388\\
\hline
\end{tabular}
}
\end{table}
We conduct experiments on a function of the form \eqref{Kronecker} with frequencies $\zeta_k$ and coefficients $c_k$ given in Table \ref{tab:4exp}.
The function is sampled at 257 points, and white noise is added to achieve different signal-to-noise rations (SNR).
In the simulations below we tested SNR levels between 0 dBW and 25 dBW in steps of 2.5 dBW. For each SNR level 10000 simulations where computed using the dual ascent method and the ESPRIT method.
For the ESPRIT approach, exponentials are estimated using the $P=4$  largest singular vectors, and a least squares fit is used to determine the coefficients $c_p$ in  \eqref{Kronecker}.
A Hankel matrix is then generated from this vector and the approximation error is computed in Frobenius norm. For the dual ascent method, the penalty level is chosen as $\sigma_0 = (\sigma_4(F)+\sigma_5(F))/2$, where $F$ is the Hankel matrix from the noisy signal. The stepsize parameters $\alpha_k$ are chosen so that they decay as $k^{-1/2}$, and the algorithm is stopped when $\|A^{k}-\mathcal{P}_\mathcal{H}(A^k)\| < 10^{-6}$.

In the top left panel of Figure \ref{barplot}, the difference between the Frobenius errors obtained by the ESPRIT method and the dual ascent method are shown in bar plots. The differences between the errors are scaled with the noise level in order to illustrate them in the same plot, i.e., the bar plots illustrates
$$
\left( \| A_{\mathrm{ESPRIT}} - H(f)\| -\| A_{\mathrm{dual~ascent}} - H(f)\| \right) 10^{\mathrm{SNR}/20}.
$$
From Figure \ref{barplot} there are very few events on the lowest bar group, i.e., when the errors between the two methods are of equal size. In fact, the error obtained by the ESPRIT is higher than the one obtained by the dual ascent in every single one of the 110000 simulations. Note though that this result holds with respect to error in the Frobenius norm and not in the regular $\ell^2$-norm for the vectors. The lower left barplot of Figure \ref{barplot} shows the difference in $\ell^2$ norm on the vectors that generate the Hankel matrices, i.e.,
$$
\left( \| a_{\mathrm{ESPRIT}} - f\|_{\ell^2} -\| a_{\mathrm{dual~ascent}} - f\|_{\ell^2} \right) 10^{\mathrm{SNR}/20},
$$
where $a$ and $f$ are the vectors that generate the Hankel matrices $A$ and $F$, respectively. Here, we can see that the ESPRIT method typically gives a better approximation than the dual ascent.
\begin{figure*}[h!]
\centering
\includegraphics[width=0.49\linewidth, trim=0.0cm 0.0cm 0.0cm 0.0cm]{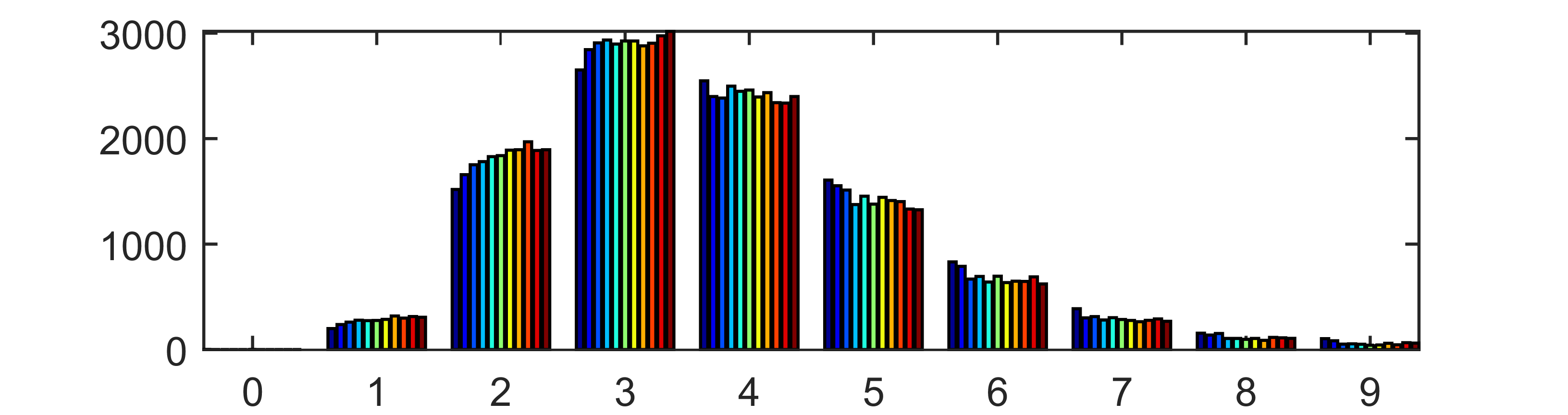}
\includegraphics[width=0.49\linewidth, trim=0.0cm 0.0cm 0.0cm 0.0cm]{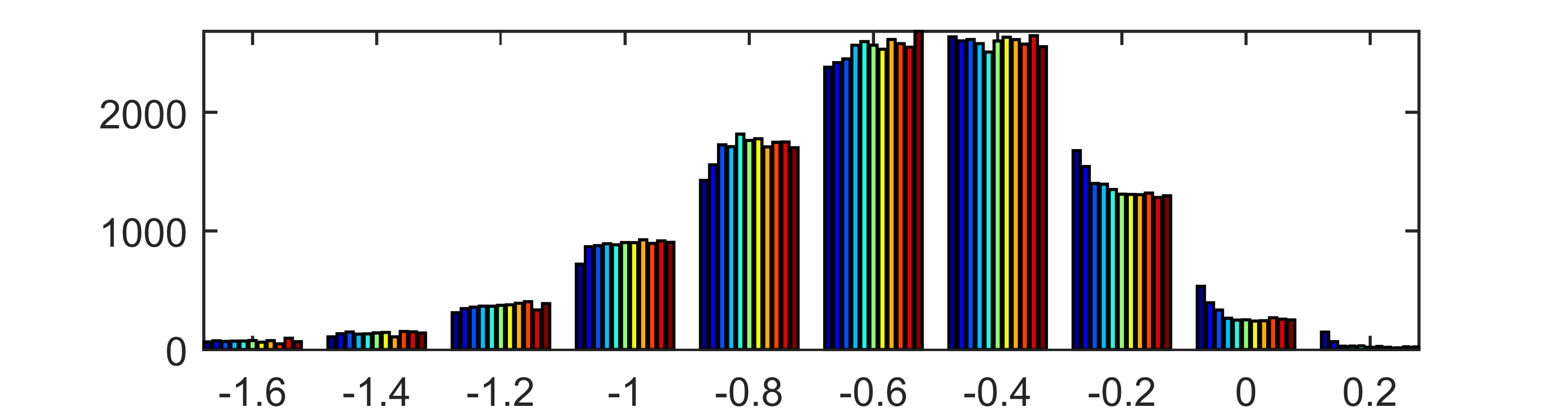}
\caption{\label{barplot} Barplots over the difference between the errors obtained by ESPRIT and dual ascent for simulations using 11 different SNR levels indicated by different colors. The left panel shows the difference for the Frobenius norm errors, while the right panel shows the difference for the $\ell^2$ errors. The errors have been normalized by the noise level. }
\end{figure*}



\section{Acknowledgment}
This research is partially supported by the Swedish Research Council, grants no. 2011-5589, 2012-4213 and 2015-03780; and the Crafoord Foundation.
\bibliographystyle{plain}
\bibliography{referenser,newlib}
\end{document}